\documentclass[11pt,a4paper]{article} 
\usepackage{epsf,epsfig,amsfonts,amsgen,amsmath,amstext,amsbsy,amsopn,amsthm}
\usepackage{amsmath}
\usepackage{amsfonts,amsthm,amssymb,bm}
\usepackage{amsfonts}
\usepackage{graphics}
\usepackage{latexsym,bm}
\usepackage{amsfonts,amsthm,amssymb,bbding}
\usepackage{indentfirst}
\usepackage{graphicx}
\usepackage{color}
\usepackage[colorlinks=true,anchorcolor=blue,filecolor=blue,linkcolor=red,urlcolor=blue,citecolor=blue]{hyperref}     
\usepackage{float}
\usepackage{tikz,enumerate}
\usepackage{geometry} 
\newtheorem{thm}{Theorem}[section]
\newtheorem{conj}[thm]{Conjecture}
\newtheorem{lemma}[thm]{Lemma}
\newtheorem{claim}{Claim}[section]

\newtheorem{defn}[thm]{Definition}
\newtheorem{cor}[thm]{Corollary}

\newtheorem{prop}[thm]{Proposition}

\newtheorem{example}[thm]{Example}

\renewcommand{\leq}{\leqslant}
\renewcommand{\le}{\leqslant}
\renewcommand{\geq}{\geqslant}
\renewcommand{\ge}{\geqslant}

\begin{document}
\title{An exponentially small gap of the Perron vector\\
on independent sets}

\author{
Hongzhang Chen\thanks{School of Mathematics and Statistics, 
Gansu Center for Applied Mathematics, 
Lanzhou University, Lanzhou, Gansu, 730000, China. Email: \url{mnhzchern@gmail.com}.} 
\and 
Jianxi Li\thanks{School of Mathematics and Statistics, Minnan Normal University, Zhangzhou, Fujian, 363000, China. 
Email: \url{ptjxli@hotmail.com}. Partially supported by the NSF of Fujian Province (No. 2021J02048).}
\and 
Yongtao Li\thanks{Corresponding author. Yau Mathematical Sciences Center (YMSC), Tsinghua University, Beijing, 100084, China. 
Email: \url{ytli0921@hnu.edu.cn}.} 
\and 
Lele Liu\footnote{School of Mathematical Sciences, Anhui University, Hefei, 230601,
 China. Email: \url{liu@ahu.edu.cn}. 
 Supported by the NSFC (No. 12471320), and Anhui Provincial NSF for Excellent Young Scholars (No. 2408085Y003).} 
\and 
Bo Ning\footnote{College of Cryptology and Cyber Science \& College of Computer Science, Nankai University, Tianjin, 300350, China.
Email: \url{bo.ning@nankai.edu.cn}. 
Partially supported by the NSFC (No. 12371350) and Fundamental Research Funds for the Central Universities, Nankai University (No. 63243151).}
}

\date{\today}
\maketitle

\begin{abstract}
A classical result of Cioab\u{a} states that if $G$ is a connected graph with the unit Perron vector $\bm{x}$, then any independent set $S$ of $G$ satisfies $\sum_{v\in S} x_v^2 \le \frac{1}{2}$, with equality if and only if $G$ is a bipartite graph and $S$ is one of the partite sets. 
Let $\chi(G)= k $ be the chromatic number of $G$. A well-known conjecture of Gregory asserts that any independent set $S$ of $G$ satisfies $\frac{1}{2} - \sum_{v\in S}x_v^2 = \Omega ((k/n)^{1/2})$. 
Recently, Liu and Ning [{\it J. Combin. Theory Ser. B 176 (2026)}] disproved Gregory's conjecture by constructing a graph $G$ and an independent set $S$ such that $\frac{1}{2}- \sum_{v\in S}x_v^2 = O(k^5/n^3)$. 
Furthermore, they conjectured that this bound is tight up to a constant factor. 
In this paper, we first show that any cycle $C_n$ with odd integer $n\ge 7$ provides a simple counterexample to Gregory's conjecture.
Second, we establish that for any independent set $S$, we have $\frac{1}{2} - \sum_{v\in S}x_v^2 =  \frac{q}{4\lambda -2q}$, where $\lambda$ is the spectral radius of $G$, and $q$ is the Rayleigh quotient of $\bm{x}$ restricted to $\bar{S} :=V(G)\setminus S$. 
Third, we construct a graph with arbitrarily large chromatic number and find an independent set $S$ such that $\sum_{v\in S}x_v^2$ can be arbitrarily close to $\frac{1}{2}$, with an exponentially small gap. 
Our construction shows that there is no universal lower bound of the form $\Omega (k^{\alpha}/n^{\beta})$ for any $\alpha, \beta >0$. 
This settles both Gregory's original conjecture and the modified conjecture of Liu and Ning in the negative. 
Finally, we show the tightness of our construction and provide some local weighted lower bounds. 
\end{abstract}

{\bf Keywords:} Spectral radius; Perron vector; Gregory's conjecture; Liu--Ning's conjecture.

{\bf 2020 Mathematics Subject Classification: } 05C50; 05C35. 

\section{Introduction}
Let $G=(V,E)$ be a simple connected graph of order $n$. The {\it adjacency matrix} of $G$ is 
defined as $A(G)=[a_{ij}]_{i,j=1}^n$, where $a_{ij}=1$ if $ij\in E(G)$, and $a_{ij}=0$ otherwise. 
The {\it spectral radius} $\lambda (G)$ is defined as the maximum modulus of eigenvalues of $A(G)$. 
The Perron--Frobenius theorem implies that
the spectral radius $\lambda (G)$ is the largest eigenvalue of $A(G)$.  
For a connected graph $G$,  there exists a  positive unit eigenvector $\bm{x} = (x_v)_{v\in V}$ satisfying $\sum_{v\in
V} x_v^2 = 1$ with $x_v > 0$ for all $v\in V$. This vector is called
the {\it Perron vector} (or principal eigenvector) of $G$.  

Spectral graph theory is an important area as it reveals deep connections between the combinatorial structure of a graph and the algebraic properties of its associated matrices. 
In recent years, spectral graph theory has also found very important applications in extremal combinatorics \cite{Hua2019, JTYZZ2021}. 
By studying eigenvalues and eigenvectors, it provides powerful tools for analyzing graph properties that are otherwise hard to compute. In particular, spectral methods yield bounds on fundamental parameters such as the independence number \cite{GN2008, Hae2021} and the chromatic number \cite{Wil1967,Bil2006, WE2013}. 
Let $\alpha (G)$ be the independence number, and let $n_{\ge 0}(G)$ be the minimum number 
of non-negative eigenvalues taken over all Hermitian weighted adjacency matrices of $G$. 
The well-known inertia bound  states that $\alpha (G)\le n_{\ge 0}(G)$. 
Recently, Kwan and Wigderson \cite{KW2024} proved that this bound is far from tight. They obtained the largest possible gap between $\alpha(G)$ and $n_{\ge 0}(G)$, showing that there exists an $n$-vertex regular graph $G$ with $\alpha (G)=O(n^{3/4})$ and $n_{\ge 0}(G)=\Omega (n)$. 
Furthermore, Tang, Zhang and Elphick \cite{TZE2025} 
constructed an $n$-vertex graph $G$ such that 
$\alpha (G)\le 2$ and $n_{\ge 0}(G) =\Omega (n^{1/9})$. 
Consequently, we see that $n_{\ge 0}(G)$ cannot be upper bounded by any function of $\alpha (G)$.

\smallskip 
In this paper, we investigate the distribution of the Perron vector on independent sets for graphs with given chromatic number. 
For a subset $S\subseteq V(G)$, 
the {\it Perron mass} of $S$ is denoted by
\[ \sigma (S):= \sum_{v\in S} x_v^2. \]  
The {\it independent Perron mass} of $G$ is defined as 
\[ \sigma_{\mathrm{ind}} (G) :=\max_{S} \sum_{v\in S} x_v^2,\]
where the maximum is taken over all independent sets $S \subseteq V(G)$. 
This parameter is closely related to the local properties of $G$, and is of considerable interest. 
It is well-known (see, e.g., \cite{BFP2008}) that if $G$ is a bipartite graph with $V(G)=A\sqcup B$, then $\sigma (A) = \sigma (B) = \frac{1}{2}$. This identity is useful in the proof of a spectral stability result for a Tur\'{a}n-type problem \cite[Lemma 2.5]{LLZ-spectral-ESS}. 
The study of the coordinates of the Perron vector has always been an important problem that has attracted great attention in spectral graph theory; see \cite{LN2026,CH2026} for recent progress.

\smallskip
A well-known result in spectral graph
theory states that a connected graph 
is bipartite if and only if the negative of the largest eigenvalue is also an eigenvalue of the adjacency matrix.  
In 2010, Cioab\u{a} \cite{Cio2010} proved that for any connected graph $G$, we have
$\sigma_{\mathrm{ind}}(G) \le 1/2$, with equality if and only if $G$ is bipartite. 
To some extent, 
this result provides an interesting  characterization involving $\sigma_{\mathrm{ind}}(G)$ for a connected graph $G$ to be
bipartite. Moreover, 
this result naturally raises an intriguing 
 problem of bounding the quantitative gap $\frac{1}{2} - \sigma_{\mathrm{ind}}(G)$
for a graph $G$ with higher chromatic number  $\chi(G)=k\ge 3$. David A. Gregory proposed the following conjecture,
which was collected and introduced by Cioab\u{a}~\cite{Cio2021} at an online conference on spectral graph theory.

\begin{conj}[Gregory, cf. \cite{Cio2021}]  \label{conj-sigma}
    Let $G$ be a connected graph on $n$ vertices with chromatic number $\chi (G)=k\ge 3$. Then 
    $\sigma_{\mathrm{ind}} (G) \le \sigma_{\mathrm{ind}} (S_{k-1,n})$, 
    where $S_{k-1,n} := K_{k-1}\vee I_{n-k+1}$ is the split graph obtained by joining a clique $K_{k-1}$ to an independent set of size $n-k+1$.  
\end{conj}

In other words, the split graph $S_{k-1,n}$ achieves the maximum value of $\sigma_{\mathrm{ind}} (G)$ among all $n$-vertex graphs $G$ with chromatic number $k$. 
Incidentally, we mention here that split graphs often serve as the extremal graphs of several spectral graph problems in the literature; we refer the readers to \cite{Tait2019,CDT2024,LZS2024,LZZ2025, LLZ-color-critical}. 
By direct computation, we can check that (see, e.g., \cite{LN2026})
\[  \sigma_{\mathrm{ind}} (S_{k-1,n}) = \frac{1}{2} - \frac{k-2}{2\sqrt{(k-2)^2 +4(k-1)(n-k+1)}} . \]
Equivalently, Conjecture \ref{conj-sigma} can be rewritten as the following numerical form.  

\begin{conj}[Gregory, cf. \cite{Cio2021}]\label{conj-Gre}
Let $G$ be a connected graph on $n$ vertices with $\chi(G) =k $, and let $\bm{x}=(x_v)_{v\in V}$ be the unit Perron vector of $G$. 
Suppose that $S$ is an independent set of $G$.  Then
\begin{equation}\label{eq-Gre}
  \frac{1}{2} - \sum_{v\in S} x_v^2 \ge \frac{k-2}{2\sqrt{(k-2)^2 + 4(k-1)(n-k+1)}}.
\end{equation}
\end{conj}

This conjecture was also selected by Liu and Ning  \cite[Conjecture 23]{LN2023} as one of ``{\it Unsolved problems in spectral graph theory}''. 
In the case $k=2$, the right-hand side in (\ref{eq-Gre}) is $0$, which is consistent with  
Cioab\u{a}'s theorem. In the case $k\ge 3$, the bound (\ref{eq-Gre}) implies $\frac{1}{2} - \sum_{v\in S} x_v^2 >0$. The aim of Conjecture \ref{conj-Gre} is to provide a
nontrivial universal lower bound. 

\smallskip 
In 2026, Liu and Ning~\cite{LN2026} made the first progress on Conjecture \ref{conj-Gre}.
They surprisingly disproved Conjecture~\ref{conj-Gre} by constructing a graph $G$ 
and exhibiting an independent set $S$ in $G$ such that
$ \frac12 - \sum_{v\in S} x_v^2 = O(k^5/n^3)$, which is much smaller than the
conjectured bound of (\ref{eq-Gre}). 
Inspired by this result, they further proposed a modified conjecture as follows.

\begin{conj}[Liu--Ning \cite{LN2026}]\label{conj-LN}
Let $G$ be a connected graph of order $n$ with chromatic number $\chi(G)=k\ge 3$. 
Suppose that $S$ is an independent set of $G$. Then 
\[  \frac{1}{2} - \sum_{v\in S} x_v^2 \ge \Omega\left( \frac{k^5}{n^3} \right). \]  
\end{conj}

\subsection{An exponentially small gap}

In this paper, we construct a simple graph with arbitrarily large chromatic number $k$ and an independent set $S$ for 
which the gap between $\frac12 $ and $ \sum_{v\in S}x_v^2$ 
is exponentially small. 
Our construction not only refutes the original conjecture of Gregory, but also disproves the modified
polynomial-gap conjecture of Liu and Ning.

\begin{thm} \label{thm-main}
For every integer $k\ge3$ and all sufficiently large $n$, there exist a connected
graph $G$ on $n$ vertices with $\chi(G) = k$ and an independent set $S\subseteq V(G)$
such that
\[
\frac12 - \sum_{v\in S} x_v^2 
\le e^{ -\Omega_k(n\log n)}.
\]
\end{thm}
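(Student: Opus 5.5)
We take the identity from the abstract as given: for any independent set $S$,
\[
\tfrac12-\sigma(S)=\frac{q}{4\lambda-2q},\qquad q=\frac{\sum_{uv\in E(G[\bar S])}2x_ux_v}{\sum_{v\in\bar S}x_v^2}.
\]
By this identity it suffices to build $G$ and $S$ so that every edge of $G$ lying inside $\bar S$ carries Perron weight $x_ux_v\le e^{-\Omega_k(n\log n)}$, while $\sum_{v\in\bar S}x_v^2$ stays bounded below by a constant. The idea is to hide the unique non-bipartite part of $G$ (a clique $K_k$) at the far end of a long path attached to a very dense bipartite core. Along such a path the Perron entries decay geometrically with ratio about $1/\lambda$, and $\lambda$ is linear in $n$.

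\emph{Construction.} Let $a=\lfloor n/3\rfloor$, $L=\lfloor n/3\rfloor$, and fix $k\ge3$. Take $K_{a,a}$ with parts $A,B$. Attach a path $r=p_0,p_1,\dots,p_L$ at a vertex $r\in A$ (so $p_1,\dots,p_L$ are new vertices). Then attach a copy of $K_k$, with $p_L$ joined to exactly one clique vertex $w_1$. Pad with extra pendant vertices on the core if needed so that $|V(G)|=n$. The graph is connected, and $\chi(G)=k$ since $G$ minus the edges of the clique is bipartite and $K_k\subseteq G$. Let $S$ be the side of the proper $2$-colouring of $G-E(K_k)$ that contains $B$, with all clique vertices removed except possibly one, so that $S$ is independent. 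Then every edge inside $\bar S$ lies within the clique or joins $p_L$ to the clique. Since $K_{a,a}\subseteq G$ we have $\lambda\ge a\ge n/4$.

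\emph{Decay estimate.} Let $T=\{p_1,\dots,p_L\}\cup V(K_k)$ be the tail; it is attached to the rest of $G$ only through the edge $p_0p_1$. For $v\in T\setminus\{p_1\}$, the eigen-equation gives $\lambda x_v\le k\max_T x$. Since $\lambda>k$, the maximum of $x$ over $T$ is attained at $p_1$. The same argument applied to the subtail $T_i=\{p_i,\dots,p_L\}\cup V(K_k)$ shows $\max_{T_i}x=x_{p_i}$. Hence
\[
\lambda x_{p_i}=x_{p_{i-1}}+x_{p_{i+1}}\le x_{p_{i-1}}+x_{p_i},
\qquad\text{so}\qquad x_{p_i}\le \frac{x_{p_{i-1}}}{\lambda-1}.
\]
Iterating gives $x_{p_L}\le(\lambda-1)^{-L}$, and every clique entry is at most $x_{p_L}$. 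Therefore the numerator of $q$ is at most $k^2(\lambda-1)^{-2L}\le k^2(n/5)^{-2n/3+O(1)}=e^{-\Omega(n\log n)}$.

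\emph{Denominator and main obstacle.} The remaining work, and what I expect to be the main obstacle, is to show that $\sum_{v\in\bar S}x_v^2$ is bounded below by an absolute constant; this must be done without Cioab\u{a}'s theorem, which only gives an upper bound. I would try the following. The core $K_{a,a}$ carries almost all of the Perron mass, since the tail and the padding together have mass $O(1/\lambda)$ by the same decay and eigen-equation arguments. Moreover, the eigen-equations on $A$ and $B$ are nearly symmetric: $\lambda x_u=\sum_{B}x+O(\text{tail})$ for $u\in A$, and symmetrically for $B$. This forces $\sum_A x^2$ and $\sum_B x^2$ each to be $\tfrac12-O(1/\lambda)$, so $\sum_{\bar S}x^2\ge\sum_A x^2\ge 1/3$.

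Plugging into the identity gives
\[
\tfrac12-\sigma(S)\le \frac{3k^2(\lambda-1)^{-2L}}{4\lambda-2q}=e^{-\Omega_k(n\log n)}.
\]
Here the denominator is harmless because $q$ itself is exponentially small, so $4\lambda-2q\ge\lambda$. This proves the theorem.
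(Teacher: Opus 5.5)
Your proof is essentially the paper's: the same construction (a complete bipartite core, a path of length $\Theta(n)$, and a $K_k$ hung at its far end), the same identity-based reduction to the edges of $G[\bar S]$, all of which lie in or next to the clique, and the same decay $x_{p_i}\le x_{p_{i-1}}/(\lambda-1)$. You derive the decay by a maximum principle on the subtails $T_i$: every vertex of $T_i$ other than $p_i$ has all of its at most $k<\lambda$ neighbours inside $T_i$, so the maximum over $T_i$ is attained at $p_i$. This is a tidier substitute for the paper's explicit solution of the clique coordinates followed by reverse induction on the ratios $x_{p_i}/x_{p_{i-1}}$. It would also work for any bounded-degree gadget placed at the end of the path.

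Two things need fixing. First, the step you call the main obstacle is not one, and your remark about Cioab\u{a}'s theorem is backwards. Since $\sum_{v\in\bar S}x_v^2=1-\sigma(S)$, the upper bound $\sigma(S)\le\frac12$ is exactly the lower bound $\sum_{v\in\bar S}x_v^2\ge\frac12$ you need. It even follows from the identity you assumed, because $0\le q\le\lambda$ makes its right-hand side nonnegative. Better still, the one-line computation behind that identity, $\lambda=\bm x^{\mathsf T}A\bm x=2\lambda\sigma(S)+2\sum_{uv\in E(G[\bar S])}x_ux_v$, gives $\frac12-\sigma(S)=\frac1\lambda\sum_{uv\in E(G[\bar S])}x_ux_v$ directly. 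This is the form the paper uses (Theorem~\ref{thm:sum-uv}), and no denominator appears at all. Your near-symmetry heuristic for $\sum_A x^2$ and $\sum_B x^2$, which as written is not a proof, can simply be dropped.

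Second, the vertex count is off. With $a=L=\lfloor n/3\rfloor$ you get $2a+L+k=3\lfloor n/3\rfloor+k\ge n+1$ vertices, and padding cannot remove vertices. Take $a=L=\lfloor (n-k)/3\rfloor$ and add at most two pendant vertices instead. With these changes the argument is complete.
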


The exponential decay established in Theorem \ref{thm-main} is far smaller than any polynomial bound. Thus, 
Theorem \ref{thm-main} implies that no universal lower bound of the form $\Omega(k^\alpha / n^\beta)$
 can hold for any reals $\alpha,\beta>0$. We state it as the following corollary. 
Consequently, no lower bound depending polynomially on $n$ and $k$ can hold in this setting.

\begin{cor}
    There do not exist real numbers $C>0$ and $\alpha,\beta >0$ such that every connected graph $G$ of order $n$ with $\chi (G)=k$ satisfies $\frac{1}{2} - \sum_{v\in S} x_v^2 \ge Ck^{\alpha}/ n^{\beta} $ for every independent set $S$.  
\end{cor}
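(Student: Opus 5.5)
The corollary follows from Theorem \ref{thm-main} by comparing an exponentially small quantity with a polynomially small one. Suppose, for contradiction, that there are constants $C>0$ and $\alpha,\beta>0$ such that every connected graph $G$ of order $n$ with $\chi(G)=k$ satisfies
\[
\frac12-\sum_{v\in S}x_v^2 \ge \frac{Ck^{\alpha}}{n^{\beta}}
\]
for every independent set $S$. It is enough to find a single $k$ and a single $n$ for which this fails. I fix $k=3$; any fixed $k\ge 3$ would work equally well.

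By Theorem \ref{thm-main} with $k=3$, there are constants $c>0$ and $n_0$ (depending only on $k$) with the following property. For every $n\ge n_0$ there is a connected graph $G_n$ on $n$ vertices with $\chi(G_n)=3$, together with an independent set $S_n$, such that
\[
\frac12-\sum_{v\in S_n}x_v^2\le e^{-cn\log n}=n^{-cn}.
\]
Under the assumed bound we would therefore have
\[
C3^{\alpha}n^{-\beta}\le n^{-cn}
\quad\text{for all } n\ge n_0 .
\]
Taking logarithms, this becomes
\[
\log(C3^{\alpha})\le (\beta-cn)\log n .
\]
The left-hand side is a fixed real number. The right-hand side tends to $-\infty$ once $cn>\beta$. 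So the inequality fails for some sufficiently large $n$, which is the desired contradiction.

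There is no real obstacle in this argument. The only care needed is in unpacking the $\Omega_k$ notation of Theorem \ref{thm-main}: for fixed $k$ it supplies an explicit constant $c>0$ and a threshold $n_0$ beyond which the bound holds. Once $k$ is fixed, the constant $C3^{\alpha}$ does not depend on $n$, and the comparison above goes through. This also yields the stronger informal statement that no lower bound depending polynomially on $n$ and $k$ can hold.
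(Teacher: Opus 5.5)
Your proposal is correct and follows the paper's own argument: fix $k$, apply Theorem~\ref{thm-main}, and note that $e^{-\Omega_k(n\log n)}$ eventually falls below $Ck^{\alpha}/n^{\beta}$. The paper states this in one line ("exponential decay is far smaller than any polynomial bound"), and you have simply written out the logarithmic comparison explicitly.
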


In fact, we shall prove the following more general result than Theorem \ref{thm-main}. 

\begin{thm} \label{thm-main-2}
    Let $k\ge 3$, $\ell \ge 2$ and $m\ge 2k$. 
    There exist a connected graph $G=G(m,\ell ,k)$ with order $n:=2m +2\ell +k$ and 
    $\chi (G)=k$, 
    and an independent set $S$ with  
    $$\frac{1}{2} - \sum_{v\in S} x_v^2 \le \frac{30k}{(m-1)^{4\ell +3}}.$$ 
\end{thm}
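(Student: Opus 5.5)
The plan is to build $G(m,\ell,k)$ as a large dense bipartite ``reservoir'' that holds almost all of the Perron mass, joined by a long path to a small clique $K_k$ that forces $\chi(G)=k$. Concretely, take $K_{m,m}$ with parts $A,B$, a path $P=u_1u_2\cdots u_{2\ell}$ with $u_1$ adjacent to one vertex $a_0\in A$, and a clique $K_k$ whose vertex $w_1$ is adjacent to $u_{2\ell}$. This gives $n=2m+2\ell+k$. The graph minus $k-2$ vertices of the clique is bipartite, so $\chi(G)=k$, and $G$ is connected.

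For $S$, take the colour class of this bipartition that contains $B$, restricted to the bipartite part, together with one clique vertex chosen so that $S$ stays independent. If parity forces $w_1\notin S$, use some $w_j\neq w_1$ instead; since $w_1\in\bar S$ anyway, the only edges inside $\bar S$ are edges of the clique, plus possibly the single edge $u_{2\ell}w_1$.

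The key tool is the identity from the abstract, $\frac12-\sigma(S)=\frac{q}{4\lambda-2q}$. In raw form it comes from splitting $\lambda\bm x=A\bm x$ over $S$ and $\bar S$:
\[
\lambda\|\bm x_S\|^2=\bm x_S^{T}A_{S\bar S}\bm x_{\bar S},\qquad
\lambda\|\bm x_{\bar S}\|^2=\bm x_{\bar S}^{T}A_{\bar S S}\bm x_S+Q,
\]
where $Q=\bm x_{\bar S}^TA_{\bar S\bar S}\bm x_{\bar S}=2\sum_{uv\in E(\bar S)}x_ux_v$. Subtracting gives $\frac12-\sigma(S)=Q/(2\lambda)$. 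So it suffices to show that $Q$ is tiny, i.e. that the Perron entries on the clique are tiny.

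The entries decay along the path. Since $\lambda\ge\lambda(K_{m,m})=m$, and $\lambda\le m+O(1)$ is not needed, the eigen-equations $\lambda x_{u_j}=x_{u_{j-1}}+x_{u_{j+1}}$ give the decay. I will prove by backward induction, starting at the clique, that $x_{u_{j+1}}\le x_{u_j}/(\lambda-1)$, and similarly $x_{u_1}\le x_{a_0}/(\lambda-1)$. The base case is the clique. Let $M=\max_i x_{w_i}$. The eigen-equation at a vertex attaining $M$ gives $\lambda M\le (k-1)M+x_{u_{2\ell}}$, hence $M\le x_{u_{2\ell}}/(\lambda-k+1)$, and $m\ge 2k$ makes $\lambda-k+1\ge m/2$. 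Then the path equation at $u_{2\ell}$, $\lambda x_{u_{2\ell}}=x_{u_{2\ell-1}}+x_{w_1}\le x_{u_{2\ell-1}}+x_{u_{2\ell}}$, gives the ratio $1/(\lambda-1)$, and the induction propagates this down the path. Since $x_{a_0}\le 1$, and in fact $x_{a_0}=O(m^{-1/2})$, this yields
\[
x_{u_{2\ell}}\le (m-1)^{-2\ell}x_{a_0}\quad\text{and}\quad M\le \frac{2}{m}(m-1)^{-2\ell}x_{a_0}.
\]

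For the final bound, estimate $Q\le k(k-1)M^2+2x_{u_{2\ell}}M$. Both terms are of order $k^2(m-1)^{-4\ell-2}x_{a_0}^2$, with an extra $1/m$ available. Using $x_{a_0}^2\le 1/m$, obtained from the equation $\lambda x_{a_0}\ge\ldots$ and the symmetry of $A$, together with $k\le m/2$ and $\lambda\ge m$, this gives $Q/(2\lambda)\le 30k/(m-1)^{4\ell+3}$ after absorbing constants.

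I expect the main obstacle to be the constant bookkeeping, not the structure. Two points need care: (i) the clique base case, where the factor $\lambda-k+1$ instead of $\lambda-1$ costs a constant that must fit inside the $30$, and (ii) the parity case in which the edge $u_{2\ell}w_1$ lies in $\bar S$. In case (ii) the term $x_{u_{2\ell}}M$ is only one power of $m$ smaller than $x_{u_{2\ell}}^2$. I would handle this by choosing the parity of the attachment, i.e. attaching $u_1$ to $A$ or to $B$, so that $u_{2\ell}\in S$ whenever possible.
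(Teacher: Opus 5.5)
Your proposal is correct and is essentially the paper's proof: the same graph, the identity $\frac12-\sigma(S)=\frac1\lambda\sum_{uv\in E(\bar S)}x_ux_v$, reverse induction giving the ratio $1/(\lambda-1)$ along the path, and the clique bound $x_{w_j}\le x_{u_{2\ell}}/(\lambda-k+1)$. Putting $w_1$ into $S$ only removes the bridge edge $u_{2\ell}w_1$ from $\bar S$; the paper keeps that edge and shows it costs only $O((m-1)^{-4\ell-3})$ after dividing by $\lambda$, so your worry (ii) is harmless and in fact never arises with your choice of $S$. The one step you should state properly is $x_{a_0}^2=O(1/m)$, which is genuinely needed for large $k$; it follows from the \emph{upper} bound $\lambda x_{a_0}=m x_b+x_{u_1}\le\sqrt{m/2}+x_{a_0}/(\lambda-1)$ (with $x_b$ the common value on $B$ and $m x_b^2=\sigma(B)\le\frac12$), not from an inequality of the form $\lambda x_{a_0}\ge\cdots$, and the paper gets the same estimate as $\beta^*<4m^{-1/2}$ in Lemma~\ref{lem-2}(iii).
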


\subsection{Description of the construction in Theorem \ref{thm-main-2}}

We define the graph $G = G(m,\ell,k)$ in the following simple way.

\begin{defn} \label{defn-construction}
Take a complete bipartite graph $K_{m,m}$ with bipartition
$(S_A, Y_A)$, where $|S_A| = |Y_A| = m$. 
Take a path $P$ of order $2\ell$ with vertices 
$p_1, p_2, \dots, p_{2\ell}$.
Connect $p_1$ to a fixed vertex $y^* \in Y_A$.
Take a clique $K_k$ with vertex set $C = \{c_1, c_2, \dots, c_k\}$. Connect $p_{2\ell}$ to $c_1$; see Figure \ref{fig-G}. 
\end{defn}

 \begin{figure}[htbp]
\centering
\includegraphics[scale=1]{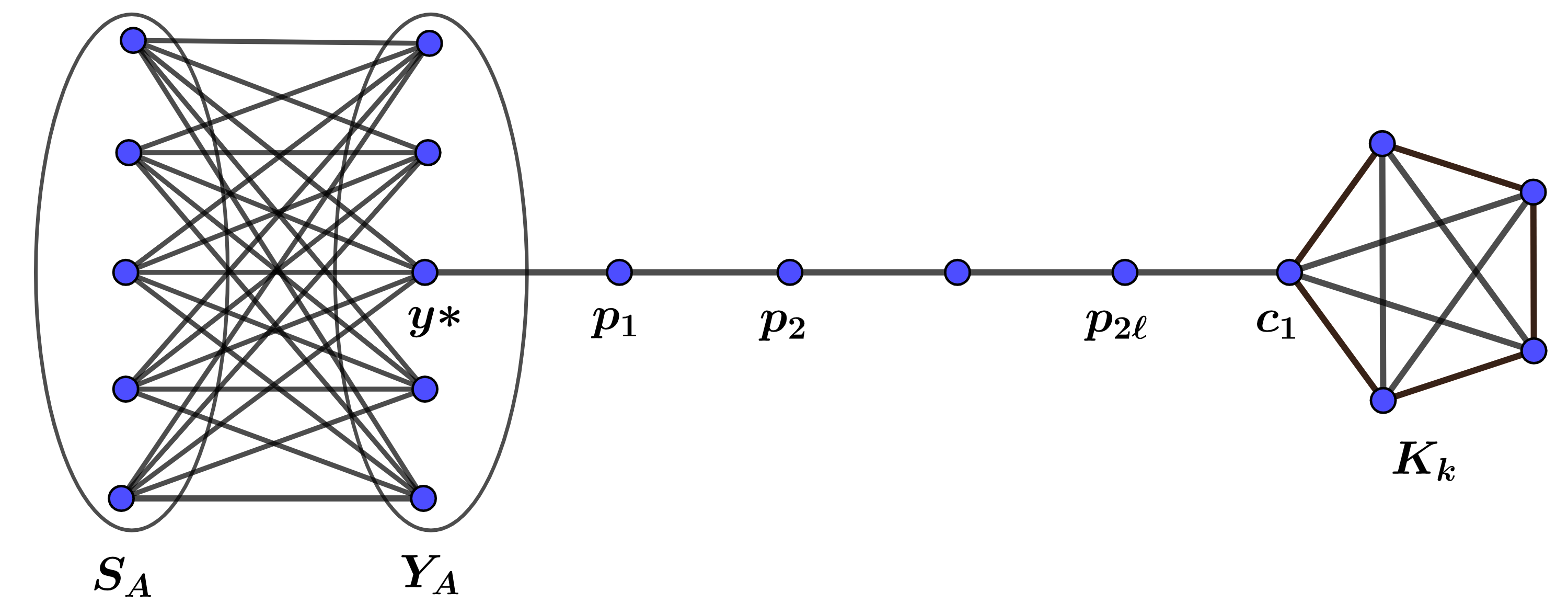} 
\caption{The graph $G(m,\ell ,k)$}
\label{fig-G}
\end{figure}

The graph $G(m,\ell,k)$ has $n= 2m + 2\ell + k$ vertices. 
Since it contains a clique $K_k$, we have $\chi (G)\ge k$. 
Conversely, the path $P_{2\ell}$ and the bipartite subgraph $K_{m,m}$ can be colored using colors already available outside the clique, so 
we get $\chi (G)=k$. Moreover, the set 
$S_{\text{path}} := \{p_1, p_3, \dots, p_{2\ell-1}\}$ 
contains no adjacent pair and has no edges to $S_{A}$, hence 
\begin{equation} \label{eq-defn-S}
    S := S_A \cup S_{\text{path}}
\end{equation}  
is independent. 
We will prove that $S$ is the desired  independent set with exponentially small gap. 

\smallskip 
\paragraph{Motivation.} 
Before presenting the proof argument, 
we would like to give an explanation of the original ideas behind such a construction. Motivated by Cioab\u{a}'s bound  $\sigma_{\mathrm{ind}}(G)\le {1}/{2}$, 
with equality if and only if $G$ is bipartite, one expects from a stability perspective that if $\sigma_{\mathrm{ind}}(G)$ is close to ${1}/{2}$, then $G$ should be close to a large bipartite graph after removing a small number of edges. So we embed a large complete bipartite subgraph $K_{m,m}$ into $G$.   
To guarantee $\chi (G)=k$, we simply embed a clique $K_k$ into $G$. To reduce the mutual influence between the coordinates of the Perron vector $\bm{x}$ on the bipartite graph $K_{m,m}$ and the coordinates on the clique $K_k$, we should connect a sufficiently long path between them. Moreover, adding such a path is a natural and direct way to ensure that $G$ is connected. 

\paragraph{Our approach.} 
The key insight of our construction is that the
exponential decay of the Perron vector along a long path
makes the coordinate of $p_{2\ell}$ and all coordinates of vertices in the clique $K_k$ sufficiently small. 
This forces the gap $\frac12-\sigma(S)$ to be exponentially tiny, where we write $\sigma (S)=\sum_{v\in S} x_v^2$ for short.  
We shall show that the coordinates of the vertices of $S_A$ and $Y_A$ are of order $m^{-1/2}$. In contrast, the $i$-th path vertex $p_i$ is of order $m^{-i-1/2}$; in particular, 
the last path vertex $p_{2\ell}$ is of order $m^{-2\ell-1/2}$, which yields that the vertices of the clique are of order $m^{-2\ell-3/2}$ and thereby become negligible when the length $\ell$ is sufficiently large; see Lemma \ref{lem-2}.  
Consequently, we can see that 
$\sum_{v\in S_A} x_v^2 = \frac{1}{2} - O(\frac{1}{m})$ and $\sum_{v\in Y_A} x_v^2 = \frac{1}{2} - O(\frac{1}{m})$. 
Moreover, we prove that the total contribution from the path and clique is negligible, i.e.,  $R:=\sum_{i=1}^{2\ell} x_{p_i}^2 + \sum_{j=1}^k x_{c_j}^2 = O_k(\frac{1}{m^2})$. These estimates imply a slightly weak bound $\frac{1}{2}- \sigma (S) < \frac{6}{m}$; see Proposition \ref{prop-weak}. 
Therefore, the main difficulty in our argument lies in deriving an exponentially small gap between $\frac{1}{2}$ and $\sigma (S)$. To overcome this, we need to capture the essential relation to the structure of the induced subgraph $G[\bar{S}]$ and establish an identity for the gap $\frac{1}{2} -\sigma (S)$; see Theorems \ref{thm-identity} and \ref{thm:sum-uv}. 

\paragraph{Organization.} 
The rest of this paper is organized as follows. In Section \ref{sec:simple}, we provide a simple counterexample for Conjecture \ref{conj-Gre}. In Section \ref{sec:identity}, we establish two useful identities for $\frac{1}{2} -\sigma (S)$. In Section \ref{sec:exponent}, we give the proofs of Theorems \ref{thm-main} and \ref{thm-main-2}. 
In Section \ref{sec:tight}, we demonstrate the tightness of the bound in Theorem \ref{thm-main} by showing $\frac{1}{2}- \sigma (S) \ge e^{-O(n\log n)}$; see Theorem \ref{thm-lower}.

\section{A simple counterexample for Gregory's conjecture}

\label{sec:simple}

Although Liu and Ning \cite{LN2026} already provided a general construction disproving
Conjecture~\ref{conj-Gre}, 
we next show that 
odd cycles provide a simple and transparent
counterexample.

\begin{example}\label{example-1}
Let $n\ge 7$ be an odd integer and $C_n$ be the cycle on $n$ vertices.
Choose $S$ to be any maximum independent set of $C_n$. Then $\chi(C_n)=3$, and
Conjecture~\ref{conj-Gre} is false.
\end{example}

\begin{proof}
 Since $C_n$ is $2$-regular, we know that  
$\lambda (C_n) = 2$ and the unit Perron eigenvector satisfies  
$x_v = \frac{1}{\sqrt{n}}$ for all $v\in V(C_n)$. 
Let $S$ be a maximum independent set with $|S|=\frac{n-1}{2}$. Then
$\sum_{v\in S} x_v^2 = \frac{n-1}{2} \cdot \frac{1}{n} = \frac{n-1}{2n}$. 
The right-hand side of (\ref{eq-Gre}) with $k=3$ becomes
$\frac12 - \frac{1}{2\sqrt{8n-15}}$. 
It is easy to verify that 
$\frac{n-1}{2n} > \frac12 - \frac{1}{2\sqrt{8n-15}}$ since 
 $n\geq 7$. Thus, Conjecture~\ref{conj-Gre} fails.
\end{proof}

\smallskip
\noindent 
{\bf Remark.} 
The odd cycle counterexample captures the essential reason: the conjectured gap in (\ref{eq-Gre}) is of order $\Theta(n^{-1/2})$, whereas for $C_n$, the actual gap is of order $O(n^{-1})$.

\section{Two exact identities for the gap}

\label{sec:identity}

For a connected graph $G$, let $S$ be an independent set
of $G$. We denote its complement by  $\bar{S}=V\setminus S$. 
We write $B$ for the 
$(S, \bar{S})$-incidence matrix, where $b_{vu}=1$ if $v\in S$, $u\in\bar{S}$ and
$vu\in E(G)$; and $b_{vu}=0$ otherwise. 
Let $A_{\bar{S}}$ denote the adjacency matrix of the induced subgraph $G[\bar{S}]$. 
Since $S$ is an independent set, the adjacency matrix $A:=A(G)$
can be partitioned as
\[
A = \begin{pmatrix} 0 & B \\ B^{\mathsf{T}} & A_{\bar{S}} \end{pmatrix}.
\]
Correspondingly, 
we partition the Perron 
vector $\bm{x}$  as $\bm{x}=\binom{\bm{y}}{\bm{z}}$, where $\bm{y}$ and $\bm{z}$ correspond to the vertices of $S$ and $\bar{S}$, respectively. 
Then $A \bm{x} = \lambda \bm{x}$ gives 
\begin{align} \label{e-3}
\lambda \bm{y} = B \bm{z} \quad \text{and} \quad 
\lambda \bm{z} = B^{\mathsf{T}} \bm{y} + A_{\bar{S}}\bm{ z}.
\end{align}  
In what follows, we establish an identity for 
$\frac{1}{2} - \sigma(S)$ that
involves the spectral radius of $G$ and the Rayleigh quotient of $\bm{z}$ restricted to the complement $\bar{S}$.

\begin{thm}\label{thm-identity}
For a connected graph $G=(V,E)$, let $S\subseteq V$ be an
independent set of $G$, and set $\bar{S}=V\setminus S$. With the notation above, let
$q := {\bm{z}^{\mathsf{T}} A_{\bar{S}} \bm{z}}/{\|\bm{z}\|^2}$.
Then 
\begin{equation*} 
\frac{1}{2} - \sigma (S) =  \frac{q}{4\lambda - 2q}.
\end{equation*}
\end{thm}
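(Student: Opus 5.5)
We will derive the identity by computing $\|\bm{y}\|^2$ and $\|\bm{z}\|^2$ from the block equations \eqref{e-3}, using only that $\bm{x}$ is a unit eigenvector for $\lambda$. Write $Y:=\|\bm{y}\|^2=\sigma(S)$ and $Z:=\|\bm{z}\|^2=1-Y$. The goal is to express both $Y$ and $Z$ through the common quantity $\bm{y}^{\mathsf T}B\bm{z}$ and the Rayleigh quotient $q$. Eliminating that common quantity will give a linear relation between $Y$ and $Z$.

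First, we take the inner product of $\lambda\bm{y}=B\bm{z}$ with $\bm{y}$, which gives $\lambda Y=\bm{y}^{\mathsf T}B\bm{z}$. Second, we take the inner product of $\lambda\bm{z}=B^{\mathsf T}\bm{y}+A_{\bar S}\bm{z}$ with $\bm{z}$, which gives
\[
\lambda Z=\bm{z}^{\mathsf T}B^{\mathsf T}\bm{y}+\bm{z}^{\mathsf T}A_{\bar S}\bm{z}=\lambda Y+qZ .
\]
Thus $(\lambda-q)Z=\lambda Y$. Combining this with $Y+Z=1$, we get $Z=\lambda/(2\lambda-q)$ and $Y=(\lambda-q)/(2\lambda-q)$. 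Then
\[
\frac12-Y=\frac{(2\lambda-q)-2(\lambda-q)}{2(2\lambda-q)}=\frac{q}{4\lambda-2q},
\]
which is the claimed identity.

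The only step needing care is checking that the quantities involved are well defined. If $\bm{z}=0$ then $\bm{x}=\bm{y}$, so $\lambda\bm{y}=B\bm{z}=0$, which is impossible. Hence $\|\bm{z}\|>0$ and $q$ is defined. Next we need $2\lambda-q\neq0$. Since $q\le\lambda(G[\bar S])\le\lambda$ by Rayleigh's principle and interlacing (or monotonicity of the spectral radius for subgraphs), we have $2\lambda-q\ge\lambda>0$, as $G$ is connected with at least one edge. This also recovers Cioab\u{a}'s bound as a byproduct, since $q\ge0$ because $\bm{z}>0$ and $A_{\bar S}\ge0$. I do not expect any genuine obstacle; the main point is to pair the first equation with $\bm{y}$ and the second with $\bm{z}$ so that the cross terms $\bm{y}^{\mathsf T}B\bm{z}$ coincide.
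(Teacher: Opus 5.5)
Your proof is correct and is essentially the paper's argument. The paper obtains the second relation from $\lambda=\bm{x}^{\mathsf T}A\bm{x}=2\bm{y}^{\mathsf T}B\bm{z}+\bm{z}^{\mathsf T}A_{\bar S}\bm{z}$ rather than by pairing the second block equation with $\bm{z}$, but the two differ only by adding $\lambda Y=\bm{y}^{\mathsf T}B\bm{z}$ and using $Y+Z=1$; your explicit check that $\bm{z}\neq 0$, so that $q$ is defined, is a detail the paper leaves implicit.
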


\begin{proof}
Note that $\|\bm{y}\|^2 =\sigma(S)$ and $\|\bm{ z}\|^2 = 1 - \sigma(S)$. 
From (\ref{e-3}), we have  
\begin{equation*}
 \bm{y}^{\mathsf{T}} B \bm{z} = \bm{y}^{\mathsf{T}} (\lambda \bm{y}) 
 = \lambda \| \bm{y}\|^2 = \lambda\, \sigma(S).
\end{equation*}
By the definition of $q$, we have
\begin{equation*}
\bm{z}^{\mathsf{T}} A_{\bar{S}} \bm{z} = 
q \|\bm{z}\|^2 = q(1-\sigma(S)).
\end{equation*} 
Since $\bm{
x}$ is a unit Perron vector of $G$,
and $S$ is an independent set, we obtain  
\[
\lambda = \bm{x}^{\mathsf{T}} A \bm{x} = 2\bm{y}^{\mathsf{T}}B\bm{z} + \bm{z}^{\mathsf{T}} A_{\bar{S}} \bm{z} = 2\lambda  \,\sigma(S) + q (1-\sigma(S)). 
\] 
Rearranging gives 
$\lambda - q = 2\lambda \,\sigma(S) - q\, \sigma(S) = (2\lambda -
q)\sigma(S)$. 
Since $A_{\bar{S}}$ is a principal submatrix of $A(G)$, the Rayleigh quotient satisfies 
$q\le \lambda (G[\bar{S}])\le \lambda $. Hence, we get $2\lambda - q \ge \lambda> 0$. Then 
$\sigma(S) = \frac{\lambda - q}{2\lambda - q} = 
\frac{1}{2} - \frac{q}{4\lambda -2q}$.
This completes the proof.
\end{proof}

Since $q$ is nonnegative, Theorem \ref{thm-identity} 
implies $\sigma (S)\le \frac{1}{2}$ for any independent set $S$, 
recovering  Cioab\u{a}'s bound.  
In addition, applying Theorem \ref{thm-identity}, we obtain the following corollary. 

\begin{cor} 
For a connected graph $G$, let $S\subseteq V$ be an
independent set of $G$, and set $\bar{S}=V\setminus S$. Let
$\mu $ be the smallest
eigenvalue of the induced subgraph $G[\bar{S}]$. Then
\[
\frac{1}{2} - \sigma(S) \ge 
\frac{\mu}{4\lambda -2\mu}, \]
with equality if and only if $\bm{z}$
is an eigenvector of $A_{\bar{S}}$ corresponding to $\mu$.
\end{cor}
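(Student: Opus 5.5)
The plan is to deduce the corollary directly from Theorem \ref{thm-identity}, using the monotonicity of the function $f(t)=\frac{t}{4\lambda-2t}$ on the interval $t<2\lambda$. A direct computation gives
\[
f'(t)=\frac{(4\lambda-2t)+2t}{(4\lambda-2t)^2}=\frac{4\lambda}{(4\lambda-2t)^2}>0,
\]
so $f$ is strictly increasing on $(-\infty,2\lambda)$.

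\textbf{Step 1: locate $q$ and $\mu$ in the domain of $f$.} By the Rayleigh principle for the symmetric matrix $A_{\bar S}$, we have $\mu\le q$. For this step we need $\bm z\neq \bm 0$. This holds because the Perron vector is positive and $\bar S\neq\emptyset$: if $\bar S$ were empty, $G$ would have no edges and could not be connected with $\lambda>0$. The proof of Theorem \ref{thm-identity} already gives $q\le\lambda<2\lambda$. For $\mu$, note that $\mu\le q\le\lambda<2\lambda$. One might also worry about $\mu$ being very negative, but any $t<2\lambda$ keeps $4\lambda-2t>0$, so both points lie in the increasing domain.

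\textbf{Step 2: apply monotonicity.} Since $\mu\le q$ and $f$ is increasing, Theorem \ref{thm-identity} gives
\[
\tfrac12-\sigma(S)=f(q)\ge f(\mu)=\frac{\mu}{4\lambda-2\mu}.
\]

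\textbf{Step 3: the equality case.} Because $f$ is strictly increasing, equality holds if and only if $q=\mu$. The Rayleigh quotient of a nonzero vector equals the smallest eigenvalue exactly when the vector lies in the corresponding eigenspace. To see this, expand $\bm z$ in an orthonormal eigenbasis of $A_{\bar S}$: then $q-\mu$ is a nonnegative combination of the terms $(\mu_i-\mu)c_i^2$, and it vanishes iff every component with $\mu_i>\mu$ is zero. Hence equality holds iff $\bm z$ is an eigenvector of $A_{\bar S}$ for $\mu$.

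There is no real obstacle here. The only points needing care are that $\bm z\ne \bm 0$ and that the denominator stays positive, so that monotonicity applies.
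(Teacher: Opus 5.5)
Your proposal is correct and follows the paper's proof: both derive the bound from Theorem~\ref{thm-identity}, using that $f(t)=t/(4\lambda-2t)$ is strictly increasing and that $\mu\le q\le\lambda$, with equality exactly when $q=\mu$. Your version is slightly more careful than the paper's: you work on $(-\infty,2\lambda)$ so the boundary case $q=\lambda$ is covered, you check $\bm z\neq\bm 0$, and you prove the equality characterization of the Rayleigh quotient via an eigenbasis expansion, where the paper only asserts it.
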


\begin{proof}
Denote $f(q) :=  \frac{q}{4\lambda - 2q}$
for each $q \le \lambda$. 
We can see that $f(q)$ is strictly increasing on $(-\infty, \lambda)$.
The Rayleigh quotient
$q ={\bm{z}^{\mathsf{T}} A_{\bar{S}} \bm{z}}/{\|\bm{z}\|^2}$
satisfies
$ \mu \le q \le \lambda$, 
which yields 
\[ \frac{1}{2} - \sigma(S) 
\ge \frac{\mu}{4\lambda - 2\mu}.
\]
Equality occurs when $q = \mu$ and 
$\bm{z}$ is an eigenvector of $A_{\bar{S}}$
corresponding to the eigenvalue $\mu$.
\end{proof}

Next, we present a simple equivalent identity.

\begin{thm}\label{thm:sum-uv}
Let $G$ be a connected graph with the spectral radius $\lambda$ and unit Perron vector $\bm{x}=(x_v)_{v\in V(G)}$. Let $S\subseteq V(G)$ be an independent set and denote   $\bar{S}:=V(G)\setminus S$. Then
\[
\frac12 - \sigma (S)= \frac1\lambda\sum_{uv\in E(\bar{S})}x_u x_v.
\]
\end{thm}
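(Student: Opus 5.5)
The identity follows by substituting $q$ back into Theorem~\ref{thm-identity}; a direct eigen-equation computation gives a useful check.

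\emph{Route via Theorem~\ref{thm-identity}.} Write $Q := \bm{z}^{\mathsf{T}} A_{\bar{S}}\bm{z}$. Since $A_{\bar S}$ is symmetric with zero diagonal, each edge of $G[\bar S]$ appears twice, so
\[
Q = 2\sum_{uv\in E(\bar S)} x_u x_v .
\]
Recall that $q = Q/\|\bm z\|^2$ and $\|\bm z\|^2 = 1-\sigma(S)$. From the proof of Theorem~\ref{thm-identity} we have
\[
\lambda - q = (2\lambda - q)\,\sigma(S), \qquad \text{so} \qquad \frac12-\sigma(S)=\frac{q}{2(2\lambda-q)} .
\]
It suffices to show that $\frac{q}{2(2\lambda-q)} = \frac{Q}{2\lambda}$, i.e.\ that $\lambda q = Q(2\lambda - q)/2$. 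Multiplying out, this is equivalent to $q\,\bigl(2\lambda/Q + 1\bigr)\cdot\tfrac{Q}{2}\cdot\tfrac{1}{\lambda} = 1\cdot\ldots$; rather than push this form through, the cleaner move is to substitute $q = Q/(1-\sigma)$ into $\lambda - q = (2\lambda - q)\sigma$. Writing $\sigma=\sigma(S)$, we get
\[
\lambda(1-\sigma) - Q = \bigl(2\lambda(1-\sigma) - Q\bigr)\sigma ,
\]
which rearranges to $\lambda(1-\sigma)(1-2\sigma) = Q(1-\sigma)$. Since $1-\sigma>0$ (indeed $\sigma\le \tfrac12$), we can divide to get
\[
\lambda(1-2\sigma) = Q, \qquad\text{i.e.}\qquad \frac12 - \sigma = \frac{Q}{2\lambda} = \frac1\lambda \sum_{uv\in E(\bar S)} x_u x_v .
\]

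\emph{Independent check from the eigen-equations.} One can also argue directly, without Theorem~\ref{thm-identity}. By \eqref{e-3}, $\lambda\|\bm y\|^2 = \bm y^{\mathsf{T}} B\bm z$ and $\lambda\|\bm z\|^2 = \bm z^{\mathsf{T}} B^{\mathsf{T}}\bm y + Q$. Subtracting the first from the second gives
\[
\lambda\bigl(\|\bm z\|^2 - \|\bm y\|^2\bigr) = Q .
\]
Since $\|\bm z\|^2 - \|\bm y\|^2 = 1 - 2\sigma(S)$, this is exactly the claimed identity.

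The only point needing care is the factor of $2$ in passing from the quadratic form $Q$ to the edge sum, together with the division by $1-\sigma>0$ in the first route. There is no real obstacle; the second route is the proof I would present, since it is a single subtraction.
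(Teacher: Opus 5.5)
Your argument is correct, and your preferred second route is essentially the paper's proof: the paper combines $\lambda\sigma(S)=\sum_{uv\in E(S,\bar S)}x_ux_v$ with $\lambda=\bm{x}^{\mathsf{T}}A\bm{x}=2\sum_{uv\in E(S,\bar S)}x_ux_v+2\sum_{uv\in E(\bar S)}x_ux_v$ (the sum of your two block equations) to eliminate the cross term, just as your subtraction does. The only blemish is the abandoned sentence in your first route: it is garbled and has a factor-of-$2$ slip (the condition should read $\lambda q = Q(2\lambda-q)$), so delete it; the substitution $q=Q/(1-\sigma)$ that follows it is fine.
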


\begin{proof}
Since $S$ is an independent set, by the eigenvalue equation, we have
\[ \lambda \, \sigma (S) = 
\lambda \sum_{u\in S} x_u^2 = \sum_{u\in S} x_u \cdot \lambda \, x_u
= \sum_{u\in S} x_u\sum_{v\in N(u)} x_v
= \sum_{uv\in E(S,\bar{S})} x_u x_v.
\]
Also, we see that 
\[
\lambda = \bm{x}^{\mathsf{T}} A(G) \bm{x}
= 2\sum_{uv\in E(S,\bar{S})} x_u x_v + 2\sum_{uv\in E(\bar{S})} x_u x_v.
\]
Substituting the first identity into the second gives 
\[
\lambda = 2\lambda \, \sigma (S) + 2\sum_{uv\in E(\bar{S})} x_u x_v.
\]
By rearranging, we get the desired identity  immediately. 
\end{proof}

\section{Proofs of Theorems \ref{thm-main} and \ref{thm-main-2}}

\label{sec:exponent}

In this section, we give the proofs of Theorems \ref{thm-main} and \ref{thm-main-2}. 
To begin with, we present the upper bounds on all coordinates of the Perron vector of the graph  $G(m,\ell ,k)$.

\subsection{Spectral analysis of the Perron vector}

Let $G$ be a graph with minimum degree $\delta (G)$ and maximum degree $\Delta (G)$. It is well-known that $\delta (G) \le \lambda (G)\le \Delta (G)$, where equality holds if and only if $G$ is a regular graph. 
For $G=G(m,\ell,k)$ and $\lambda=\lambda(G)$, the graph properly contains $K_{m,m}$ as a subgraph, so $\lambda>m$. Also $\Delta(G)=m+1$ and $G$ is not regular, so we get $\lambda <\Delta(G)=m+1$. Thus $m <\lambda < m+1$. 

We now analyze the coordinates of the Perron vector of
$G$. 
The following lemma roughly says that the Perron vector of $G$ decays exponentially along the path $P$ starting from the bipartite subgraph $K_{m,m}$ to the clique $K_k$. The coordinates of the path vertices decrease by a factor related to $\lambda$ at each step, the clique vertices are strictly smaller than the last path vertex $p_{2\ell}$, and most of the mass of $\bm{x}$ is concentrated on the bipartite subgraph $K_{m,m}$. 

\begin{lemma}\label{lem-2}
The unit Perron vector $\bm x = (x_v)_{v \in V(G)}$ of $G$
satisfies the following properties:
\begin{enumerate}[(i)]
\item Denote $p_0 := y^* \in Y_A$. 
For every $i = 1, 2, \dots, 2\ell$,
\[
x_{p_i} < \frac{x_{p_{i-1}}}{\lambda - 1}.
\]

\item Every vertex $c_i$ in the clique $C$ satisfies that for all $ j = 1,2,\dots,k$, 
\[
x_{c_j} < \frac{x_{p_{2\ell}}}{\lambda - k + 1}.
\]

\item By symmetry,
all vertices in $S_A$ share the same eigenvector coordinate,
denoted by $\alpha$;
all vertices in $Y_A\setminus\{y^*\}$ share the
same coordinate, denoted by $\beta$.
Write $\beta^* := x_{y^*}$. Then 
\[
\tfrac{1}{3}m^{-1/2} < \alpha < m^{-1/2},\qquad 
\tfrac{1}{4}m^{-1/2} < \beta < m^{-1/2},\qquad 
\tfrac{1}{4}m^{-1/2} < \beta^* < 4m^{-1/2}.
\] 

\item We have $x_{p_{2\ell}} < 4(m-1)^{-2\ell-1/2}$ and $x_{c_j} < 8(m-1)^{-2\ell-3/2}$ for all $j=1,2,\ldots ,k$.
\end{enumerate}
\end{lemma}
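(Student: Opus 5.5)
We propose to prove Lemma \ref{lem-2} directly from the eigenvalue equations $\lambda x_v=\sum_{u\in N(v)}x_u$, using $m<\lambda<m+1$ and $m\ge 2k$. We treat the clique first, then run an induction backwards along the path, and only then handle the bipartite block.

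\emph{Clique, part (ii).} The automorphisms of $G$ permuting $c_2,\dots,c_k$ and fixing everything else preserve the unique unit Perron vector. Hence $x_{c_2}=\dots=x_{c_k}=:\gamma$; write $\gamma_1:=x_{c_1}$. The eigen-equations read
\[ \lambda\gamma=\gamma_1+(k-2)\gamma, \qquad \lambda\gamma_1=x_{p_{2\ell}}+(k-1)\gamma . \]
The first gives $\gamma=\gamma_1/(\lambda-k+2)<\gamma_1$. Substituting into the second gives $\gamma_1\bigl(\lambda-\tfrac{k-1}{\lambda-k+2}\bigr)=x_{p_{2\ell}}$. Since $\lambda-k+2>1$, we have $\tfrac{k-1}{\lambda-k+2}<k-1$, and therefore $\gamma<\gamma_1<x_{p_{2\ell}}/(\lambda-k+1)$. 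This is (ii).

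\emph{Path, part (i).} We argue by downward induction on $i$. For $i=2\ell$, part (ii) and $\lambda-k+1>1$ give
\[ \lambda x_{p_{2\ell}}=x_{p_{2\ell-1}}+x_{c_1}<x_{p_{2\ell-1}}+x_{p_{2\ell}} , \]
so $x_{p_{2\ell}}<x_{p_{2\ell-1}}/(\lambda-1)$. For $1\le i<2\ell$, the induction hypothesis $x_{p_{i+1}}<x_{p_i}/(\lambda-1)\le x_{p_i}$ yields
\[ \lambda x_{p_i}=x_{p_{i-1}}+x_{p_{i+1}}<x_{p_{i-1}}+x_{p_i} , \]
which is the claim, with $p_0=y^*$.

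\emph{Bipartite block, part (iii).} Symmetry again gives constant values $\alpha$ on $S_A$ and $\beta$ on $Y_A\setminus\{y^*\}$. The eigen-equations are
\[ \lambda\alpha=(m-1)\beta+\beta^*, \qquad \lambda\beta=m\alpha, \qquad \lambda\beta^*=m\alpha+x_{p_1}. \]
Since $m<\lambda<m+1$, we get $\tfrac{m}{m+1}\alpha<\beta<\alpha$. Part (i) gives $x_{p_1}<\beta^*/(\lambda-1)$, hence
\[ \beta^*\Bigl(1-\tfrac{1}{\lambda(\lambda-1)}\Bigr)<\beta, \]
so $\beta\le \beta^*\le 2\beta$, say.

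Next we use the normalization $m\alpha^2+(m-1)\beta^2+\beta^{*2}+R=1$, where $R$ is the path and clique mass. By (i) and (ii), $R\le \beta^{*2}\sum_{i\ge1}(\lambda-1)^{-2i}(1+k)$, which is a tiny multiple of $\beta^{*2}$ because $\lambda-1\ge 2k-1$.

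The upper bounds follow quickly. From $m\alpha^2<1$ we get $\alpha<m^{-1/2}$, and then $\beta<\alpha<m^{-1/2}$ and $\beta^*\le 2\beta<4m^{-1/2}$. For the lower bounds we use $\beta<\alpha$, $\beta^*\le2\alpha$ and $R$ small. These give $1\le \bigl(2m+c\bigr)\alpha^2$ for a small absolute constant $c$, hence $\alpha>\tfrac13 m^{-1/2}$. Finally $\beta^*\ge\beta>\tfrac{m}{m+1}\alpha\ge\tfrac23\alpha>\tfrac14m^{-1/2}$.

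\emph{Part (iv).} Iterating (i) $2\ell$ times and using $\lambda-1>m-1$ together with $\beta^*<4m^{-1/2}\le 4(m-1)^{-1/2}$ gives $x_{p_{2\ell}}<4(m-1)^{-2\ell-1/2}$. For the clique we apply (ii). Because $m\ge2k$, we have $\lambda-k+1>m-k+1\ge \tfrac m2+1>\tfrac{m-1}{2}$, and this produces the factor $2/(m-1)$, hence the bound $8(m-1)^{-2\ell-3/2}$.

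The main obstacle is bookkeeping the explicit constants in (iii). It requires bounding $R$ and the ratio $\beta^*/\beta$ carefully enough to land inside the stated windows $\tfrac13,\tfrac14,4$. The hypothesis $m\ge 2k\ge 6$ should give enough room for this.
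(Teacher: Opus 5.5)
Your proposal is correct. It follows the paper's overall skeleton: eigen-equations, symmetry, downward induction along the path, then normalization. Several local steps are done differently.

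\textbf{Part (ii).} The paper sums the eigen-equations over the whole clique and gets the exact identity $\sum_j x_{c_j}=x_{p_{2\ell}}/(\lambda-k+1)$. That identity needs no symmetry and bounds the total clique weight. You instead eliminate the symmetric unknowns and bound $x_{c_1}$ and $x_{c_2}=\dots=x_{c_k}$ individually.

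\textbf{Part (i).} The paper first computes $x_{c_1}=\gamma x_{p_{2\ell}}$ explicitly. It then runs the recursion $\rho_i=1/(\lambda-\rho_{i+1})$ with the hypothesis $\rho_{i+1}<1/(\lambda-1)$. Your induction only uses $x_{p_{i+1}}<x_{p_i}$, i.e.\ $\rho_{i+1}<1$. That is exactly what the recursion needs, so it is the same mechanism with less computation.

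\textbf{Part (iii).} This is the most genuine difference.
\begin{itemize}
\item The paper controls $\beta^*$ through the identity $(\lambda^2-m^2)\alpha=x_{p_1}$, which gives $\beta^*<4\alpha$. It bounds $R<5k/m^2$ absolutely, using only $x_{p_0}\le 1$.
\item You feed (i) back in via $x_{p_1}<\beta^*/(\lambda-1)$, which gives $\beta^*<2\beta$. You then bound $R$ relative to $\beta^{*2}$.
\end{itemize}
Your route closes the normalization cleanly. Concretely, $R<\frac{1+k}{4k(k-1)}\beta^{*2}\le\tfrac16\beta^{*2}<\tfrac23\alpha^2$, so $1<(2m+4)\alpha^2$. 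The paper's route has one advantage: both the identity $(\lambda^2-m^2)\alpha=x_{p_1}$ and the absolute bound on $R$ are reused in Proposition~\ref{prop-weak}.

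\textbf{One slip to fix.} Your final chain $\beta>\tfrac{m}{m+1}\alpha\ge\tfrac23\alpha>\tfrac14 m^{-1/2}$ does not follow from $\alpha>\tfrac13 m^{-1/2}$, because $\tfrac23\cdot\tfrac13=\tfrac29<\tfrac14$. Either of these repairs works:
\begin{itemize}
\item Use $\tfrac{m}{m+1}\ge\tfrac67$, valid since $m\ge 6$; this gives $\tfrac27>\tfrac14$.
\item Use the stronger bound $\alpha^2>1/(2m+4)$ that your normalization actually produces, instead of the weakened $\tfrac13 m^{-1/2}$.
\end{itemize}
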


\begin{proof}[{\bf Proof of (i)}]
We denote the ratio $\rho_i := \frac{x_{p_i}}{x_{p_{i-1}}}$ for each $i$.
The goal is to establish $\rho _i < \frac{1}{\lambda-1}$
for all $i = 1,\dots,2\ell$.
We shall proceed by reverse induction on $i$, starting from $i=2\ell$ to $i=1$.

Recall that $P=\{p_1,\ldots ,p_{2\ell}\}$ and $C=\{c_1,\dots,c_k\}$.
By Definition \ref{defn-construction}, the vertices $c_2,\dots,c_k$ in the clique $C$ are symmetric, so they have the same eigenvector coordinate. We may assume that 
$x_c := x_{c_2}=\dots=x_{c_k}$. 
The eigenvector equation at $c_1$ reads as 
\begin{equation}\label{e-9}
\lambda x_{c_1} = \sum_{u\in N(c_1)}x_u= x_{p_{2\ell}} + (k-1)x_c.
\end{equation}
For any other clique vertex, say $c_j$ with $j\geq 2$, we have
\begin{equation*}
\lambda x_c = \sum_{u\in N(c_j)}x_u = x_{c_1} + (k-2)x_c, 
\end{equation*}
which yields $x_c = \frac{x_{c_1}}{\lambda - k + 2}$.
Substituting $x_c$ into (\ref{e-9}) gives
\[
\lambda x_{c_1} = x_{p_{2\ell}} + \frac{k-1}{\lambda - k + 2}\,x_{c_1},
\]
and rearranging yields 
\begin{equation}\label{e-12}
x_{c_1} = \gamma\, x_{p_{2\ell}}, \quad \text{where} \quad
\gamma := \frac{\lambda - k + 2}{\lambda^2 - (k-2)\lambda - (k-1)}.
\end{equation}  
Since $\lambda^2 - (k-2)\lambda - (k-1)=(\lambda+1)(\lambda-k+1)$ and $\lambda >m \ge 2k $, the denominator is positive, and one can obtain $0 < \gamma < \frac{1}{\lambda-1}$ by a simple calculation.

The eigenvector equation at $p_{2\ell}$ is
\[
\lambda x_{p_{2\ell}} = x_{p_{2\ell-1}} + x_{c_1}.
\]
Using $x_{c_1} = \gamma x_{p_{2\ell}}$ from (\ref{e-12})
and setting
$\rho_{2\ell} := \frac{x_{p_{2\ell}}}{x_{p_{2\ell-1}}}$,
we obtain $(\lambda - \gamma) x_{p_{2\ell}} = x_{p_{2\ell-1}}$ and 
\begin{equation}\label{e-14}
\rho_{2\ell} = \frac{1}{\lambda - \gamma}
< \frac{1}{\lambda - 1},
\end{equation}
where the strict inequality follows directly from $\gamma < \frac{1}{\lambda-1}< \frac{1}{2}$.

For any vertex $p_i$ with $1 \le i \le 2\ell-1$, the eigenvector equation is
$\lambda x_{p_i} = x_{p_{i-1}} + x_{p_{i+1}}$. 
Dividing by $x_{p_i}$ both sides, we obtain
\[
\lambda = \frac{x_{p_{i-1}}}{x_{p_i}} + \frac{x_{p_{i+1}}}{x_{p_i}} = \frac{1}{\rho_i} + \rho_{i+1}.
\]
Rearranging yields the recurrence
\begin{equation}\label{e-15}
\rho_i = \frac{1}{\lambda - \rho_{i+1}}.
\end{equation}
We now prove that
$\rho_i < \frac{1}{\lambda-1}$ by the reverse induction on $i = 2\ell, 2\ell-1, \dots, 1$. 
The base case $i = 2\ell$ is exactly (\ref{e-14}).
Assume that 
the inequality holds for $i+1$, i.e.,
$\rho_{i+1} < \frac{1}{\lambda-1}$.
From (\ref{e-15}), we get 
\[
\rho_i < \frac{1}{\lambda - \frac{1}{\lambda-1}}
= \frac{\lambda - 1}{\lambda^2 - \lambda - 1}.
\]
To complete the induction step, it suffices to check that $\frac{\lambda - 1}{\lambda^2 - \lambda - 1} < \frac{1}{\lambda - 1}$. 
This is trivial since $\lambda > m \ge 6$.
Thus, $\rho_i < \frac{1}{\lambda-1}$.
Consequently, we conclude that for every $i = 1, 2, \dots, 2\ell$, 
\[
x_{p_i} < \frac{x_{p_{i-1}}}{\lambda - 1}.
\]

\noindent\textbf{Proof of (ii).}
Now consider the clique $C = \{c_1, \dots, c_k\}$.
We sum the eigenvector equations over all its vertices.
The vertex $c_1$ is adjacent to $p_{2\ell}$
and the other $k-1$ clique vertices, while each $c_j$ ($j \ge 2$)
is adjacent only to the other $k-1$ clique vertices.
Summing these equations yields
$$\lambda \sum_{i=1}^k x_{c_i} =
x_{p_{2\ell}} + \sum_{i=1}^k \sum_{u \in C, u \neq c_i} x_u.$$
Since the subgraph induced by $C$ is a clique $K_k$,
every vertex coordinate $x_{c_i}$ appears exactly $k-1$ times
in the double sum on the right-hand side.
Hence the equation simplifies to
$$\lambda \sum_{i=1}^k x_{c_i} =
x_{p_{2\ell}} + (k-1) \sum_{i=1}^k x_{c_i}.$$
Rearranging, we obtain the sum of the Perron weights
$$\sum_{i=1}^k x_{c_i} = \frac{x_{p_{2\ell}}}{\lambda - k + 1}.$$
Since every $x_{c_j} > 0$, it follows that
each coordinate $x_{c_j}$ satisfies
$$x_{c_j} < \frac{x_{p_{2\ell}}}{\lambda - k + 1} \quad \text{for all } j = 1, \dots, k.$$

\noindent\textbf{Proof of (iii).} 
For any $v\in S_A$, the neighbors of $v$ are the 
vertices of $Y_A$, so
\begin{equation}\label{eq:SA}
\lambda\,\alpha = (m-1)\,\beta + \beta^*.
\end{equation}
For any $v\in Y_A\setminus\{y^*\}$,
the neighbors are precisely the $m$ vertices of $S_A$, giving
\begin{equation}\label{eq:YA}
\lambda\,\beta = m\,\alpha ~~\text{and}~~\beta = \frac{m\,\alpha}{\lambda}.
\end{equation}
Invoking the fact $m<\lambda < m+1$, 
we get from (\ref{eq:YA}) that 
$$\beta < \alpha ~~\text{and}~~ \beta = \frac{m}{\lambda}\,\alpha
> \frac{m}{m+1}\alpha .$$ 
For the vertex $y^* \in Y_A$, its neighbors consist of all vertices of $S_A$ and the vertex $p_1$,
hence 
\begin{equation}\label{eq:ystar}
\lambda\,\beta^* = m\,\alpha + x_{p_1} = \lambda \, \beta + x_{p_1} ~~\text{and}~~\beta^* = \beta + \frac{x_{p_1}}{\lambda}.
\end{equation}  
Substituting $\beta$ and $\beta^*$ into (\ref{eq:SA})  gives 
$\lambda\,\alpha
= m \beta  + \frac{x_{p_1}}{\lambda}
= m^2\frac{\alpha}{\lambda} + \frac{x_{p_1}}{\lambda}$. Then 
\begin{equation}\label{eq:alpha-xp1}
(\lambda^2 - m^2)\,\alpha = x_{p_1}.
\end{equation}
Using (\ref{eq:YA}), (\ref{eq:ystar}), (\ref{eq:alpha-xp1}) and 
$m <\lambda < m+1$ again, we have
\[
\beta^* 
= \beta + \frac{x_{p_1}}{\lambda}
= \frac{m  + (\lambda^2 - m^2)}{\lambda}\alpha < 
 \frac{3m+1}{m} \alpha \le 4 \alpha. \] 
From the above discussion, we conclude that 
$\frac{3}{4} \alpha < \beta < \alpha$ and 
$\beta < \beta^* < 4\alpha$. 
To prove $\alpha ,\beta ,\beta^* =\Theta(m^{-1/2})$, it suffices to show  $\alpha = \Theta (m^{-1/2})$. 
Clearly,  
we have 
$m \alpha^2 < 1$ and   
$\alpha < m^{-1/2}$.

Since $\bm{x}$ is a unit vector, i.e.,  $\sum_{v\in V} x_v^2 = 1$, 
we have 
\begin{equation}\label{eq:norm}
m\,\alpha^2 + (m-1)\,\beta^2 + (\beta^*)^2 + R = 1,
\end{equation}
where $R := \sum_{i=1}^{2\ell} x_{p_i}^2
+ \sum_{j=1}^{k} x_{c_j}^2 \ge 0$.

From part~(i), iterating the ratio estimate gives
$x_{p_i}< \frac{x_{p_{i-1}}}{\lambda -1}< \cdots < \frac{x_{p_0}}{(\lambda - 1)^i}$
for all $i \ge 1$.
Since $x_{p_0} \le 1$ 
and $\lambda > m  \ge  6$, the contribution of the path to $R$ satisfies 
\begin{equation} \label{eq-path}
\sum_{i=1}^{2\ell} x_{p_i}^2
< \sum_{i=1}^{\infty} \frac{1}{(\lambda - 1)^{2i}} = \frac{1}{(\lambda -1)^2-1} 
\le \frac{1}{(m-1)^2-1}< \frac{2}{m^2}.
\end{equation} 
By part~(ii), each $x_{c_j} < \frac{x_{p_{2\ell}}}{\lambda-k+1}<\frac{1}{m-k+1}$. 
Then 
\[ \sum_{j=1}^k x_{c_j}^2 < \frac{k}{(m-k+1)^2} \le \frac{4k}{m^2},\]
where the last inequality holds since $m\ge 2k$. Therefore
\begin{equation}\label{eq:R-bound}
R < \frac{2}{m^2} + \frac{4k}{m^2} \le \frac{5k}{m^2}.
\end{equation}
Note that $\beta<\alpha$ and $\beta^*< 4\alpha$. By (\ref{eq:norm}), we have 
\[
1 < m\alpha^2+(m-1)\alpha^2+16\alpha^2+R=(2m+15)\alpha^2+R,
\]
which implies 
\[ \alpha^2>\frac{1-(5k)/m^2}{2m+15}\geq 
\frac{1}{9m}.\]
Thus, we obtain that  
$\frac{1}{3}m^{-1/2} < \alpha < m^{-1/2}$.  
Combining with $\frac{3}{4} \alpha <\beta < \alpha$, 
we get $\frac{1}{4}m^{-1/2} < \beta < m^{-1/2}$. 
Moreover, using $\beta < \beta^* < 4\alpha$ yields 
$\frac{1}{4} m^{-1/2} < \beta^* < 4m^{-1/2}$, as desired.

\medskip
\noindent\textbf{Proof of (iv).}
From part (i), we have $x_{p_i}  < \frac{x_{p_{i-1}}}{\lambda - 1}$
for $i=1,\dots,2\ell$. 
Iterating this inequality yields
\[
x_{p_{2\ell}} < \frac{x_{p_{2\ell-1}}}{\lambda-1}
< \frac{x_{p_{2\ell-2}}}{(\lambda-1)^2} < \cdots
< \frac{x_{p_0}}{(\lambda-1)^{2\ell}}.
\]
Recall that $p_0=y^*$ and $x_{y^*} = \beta^* <  4m^{-1/2}$.
Since $\lambda > m$, we have 
$x_{p_{2\ell}}<  4 (m-1)^{-2\ell-1/2}$. 
By part (ii), each coordinate of the clique satisfies
$x_{c_j}<\frac{x_{p_{2\ell}}}{\lambda-k+1} < \frac{4(m-1)^{-2\ell-1/2}}{m-k+1} < 8 (m-1)^{-2\ell - 3/2}$. 
\end{proof}

\subsection{Warm up for a weak bound}

As a warm-up and to illustrate our method, we  next give a quick proof of a weak bound, saying that $\frac{1}{2} - \sigma (S) < \frac{6}{m}$. 
This follows directly from the estimates in Lemma~\ref{lem-2}.

\begin{prop} \label{prop-weak}
    Under (\ref{eq-defn-S}) and Definition \ref{defn-construction}, 
    we have $\frac{1}{2} -  \sigma(S) <  \frac{6}{m}$. 
\end{prop}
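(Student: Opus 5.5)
We bound $\frac12-\sigma(S)$ directly from the estimates of Lemma~\ref{lem-2}, without the exact identities of Section~\ref{sec:identity}. Write $\sigma(S)=m\alpha^2+\sum_{i\ \mathrm{odd}}x_{p_i}^2$. Using the normalization (\ref{eq:norm}),
\[
1 = m\alpha^2+(m-1)\beta^2+(\beta^*)^2+R ,
\]
we get
\[
\tfrac12-\sigma(S)\le \tfrac12 - m\alpha^2 = \tfrac12\Bigl((m-1)\beta^2+(\beta^*)^2+R-m\alpha^2\Bigr).
\]
So it suffices to control three quantities: the deficit of $\beta$ relative to $\alpha$, the excess $(\beta^*)^2-\beta^2$, and $R$.

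\textbf{Step 1.} Since $\beta<\alpha$ by Lemma~\ref{lem-2}(iii), we have $(m-1)\beta^2 - m\alpha^2 < -\alpha^2 + (m-1)(\beta^2-\alpha^2)\le 0$. Hence
\[
\tfrac12-\sigma(S) < \tfrac12\bigl((\beta^*)^2 - \alpha^2 + R\bigr) < \tfrac12\bigl((\beta^*)^2+R\bigr).
\]

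\textbf{Step 2.} Lemma~\ref{lem-2}(iii) gives $\beta^*<4m^{-1/2}$, so $(\beta^*)^2<16/m$. This alone yields only $8/m$, so we must sharpen it. Using (\ref{eq:ystar}) and (\ref{eq:alpha-xp1}), we have $\beta^* = \beta + x_{p_1}/\lambda$ and $x_{p_1}=(\lambda^2-m^2)\alpha$. Lemma~\ref{lem-2}(i) gives $x_{p_1}<\beta^*/(\lambda-1)$, hence
\[
\beta^*\Bigl(1-\tfrac{1}{\lambda(\lambda-1)}\Bigr)<\beta<\alpha<m^{-1/2}.
\]
Since $\lambda>m\ge 6$, this gives $\beta^*<\tfrac{31}{30}m^{-1/2}$ and so $(\beta^*)^2<1.07/m$.

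\textbf{Step 3.} By (\ref{eq:R-bound}), $R<5k/m^2\le 5/(2m)$, using $m\ge 2k$. Combining,
\[
\tfrac12-\sigma(S) < \tfrac12\Bigl(\tfrac{1.07}{m}+\tfrac{2.5}{m}\Bigr)<\tfrac{6}{m}.
\]

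The main obstacle is only bookkeeping: keeping the constants small enough. The crude bound $\beta^*<4\alpha$ from Lemma~\ref{lem-2}(iii) would still give $\frac12(16/m+5/(2m))\approx 9.25/m$, which exceeds $6/m$. That is why Step 2 refines $\beta^*$ via part (i). If that refinement turned out to be delicate, an alternative is to keep the $-\alpha^2$ term from Step 1 and use $\alpha^2>1/(9m)$, though this gains little; the refinement is the intended route.
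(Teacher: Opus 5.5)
Your proof is correct. It shares the paper's skeleton: both arguments combine $\sigma(S)>m\alpha^2$, the normalization $1=m\alpha^2+(m-1)\beta^2+(\beta^*)^2+R$, and the bound $R<5k/m^2\le 5/(2m)$. Everything then reduces to an upper bound on the discrepancy $(m-1)\beta^2+(\beta^*)^2-m\alpha^2$, and that key estimate is where you differ.

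The paper computes the discrepancy exactly. Substituting $\beta=m\alpha/\lambda$, $\beta^*=\beta+x_{p_1}/\lambda$ and $x_{p_1}=(\lambda^2-m^2)\alpha$ from (\ref{eq:alpha-xp1}) gives $\alpha^2(\lambda^2-m^2)(m+\lambda^2-m^2)/\lambda^2$. It then bounds this crudely by $7/m$ via $0<\lambda^2-m^2<2m+1$, which uses $\lambda<m+1$.

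You instead make a one-sided estimate. You discard the negative part $(m-1)(\beta^2-\alpha^2)-\alpha^2$, and you control $\beta^*$ by the self-referential inequality $\beta^*\bigl(1-\frac{1}{\lambda(\lambda-1)}\bigr)<\beta$, which follows from (\ref{eq:ystar}) and Lemma~\ref{lem-2}(i) at $i=1$. Your route needs neither (\ref{eq:alpha-xp1}) (although you cite it) nor the upper bound $\lambda<m+1$, and it gives the better constant of about $1.8/m$. The paper's exact formula keeps more information: it shows the discrepancy is positive, and combined with the path decay it is far smaller than $1/m$. That extra precision is not used for this proposition.

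There are two harmless slips. In Step 1, $(m-1)\beta^2-m\alpha^2=-\alpha^2+(m-1)(\beta^2-\alpha^2)$ is an identity, not a strict inequality; the strict bound $<-\alpha^2$ comes only afterwards, from $\beta<\alpha$. In Step 2, $\lambda(\lambda-1)>30$ yields only $\beta^*<\frac{30}{29}\beta$, not $\frac{31}{30}\beta$. So you get $(\beta^*)^2<\frac{900}{841m}<\frac{1.071}{m}$ rather than $1.07/m$, which changes nothing given the slack.
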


\begin{proof}
Recall that $\|\bm{y}\|^2= \sigma (S)$ and $\|{\bm z}\|^2 = 1 - \sigma (S)$,
where
$$\sigma (S)= \sum_{v\in S} x_v^2
= \sum_{v\in S_A} x_v^2 + \sum_{v\in S_{\mathrm{path}}} x_v^2.$$ 
Using (\ref{eq-path}), we obtain
$\sum_{v\in S_{\mathrm{path}}} x_v^2
< \sum_{i=1}^{2\ell} x_{p_i}^2 < {2}/{m^2}$, which yields 
\begin{equation} \label{eq-sigma}
m\alpha^2 < \sigma (S) < m\alpha^2 + \frac{2}{m^2}. 
\end{equation}
Recall in (\ref{eq:R-bound}) that $R < {(5k)}/{m^2}$.
Then 
\[ 1- \sigma (S) = (m-1)\beta^2 + (\beta^*)^2
+ \sum_{v\in \bar{S}_{\mathrm{path}}} x_v^2
+ \sum_{v\in C} x_v^2 < (m-1)\beta^2 + (\beta^*)^2 
+ \frac{5k}{m^2}.
\]
We next show that $\sigma (S)$ and $1-\sigma (S)$ are
almost equal. From the above discussion, our goal is to estimate the difference between $m\alpha^2$ and $(m-1)\beta^2 + (\beta^*)^2$. 
From (\ref{eq:YA}) and (\ref{eq:ystar}), we have 
\begin{equation*}
(m-1)\beta^2 + (\beta^*)^2
= (m-1)\frac{m^2\alpha^2}{\lambda^2}
  + \left(\frac{m\alpha}{\lambda}
    + \frac{x_{p_1}}{\lambda}\right)^{2} 
    = \frac{m^3\alpha^2}{\lambda^2}
   + \frac{2m\alpha\, x_{p_1}}{\lambda^2}
   + \frac{x_{p_1}^2}{\lambda^2}.
\end{equation*}
It follows that 
\begin{align}
(m-1)\beta^2 + (\beta^*)^2 - m\alpha^2
= m\alpha^2\!\left(\frac{m^2}{\lambda^2} - 1\right)
   + \frac{2m\alpha\, x_{p_1}}{\lambda^2}
   + \frac{x_{p_1}^2}{\lambda^2} .\label{eq:diff-main}
\end{align}
Recall in (\ref{eq:alpha-xp1}) that 
$x_{p_1} = (\lambda^2 - m^2)\alpha$.
Substituting this into (\ref{eq:diff-main}) yields
\begin{align*}
(m-1)\beta^2 + (\beta^*)^2 - m\alpha^2
&= -\,\frac{m\alpha^2(\lambda^2-m^2)}{\lambda^2}
   + \frac{2m\alpha^2(\lambda^2-m^2)}{\lambda^2}
   + \frac{\alpha^2(\lambda^2-m^2)^2}{\lambda^2} \\ 
&= \frac{\alpha^2(\lambda^2-m^2)(m + \lambda^2-m^2)}{\lambda^2}.
\end{align*}
Since $m< \lambda < m+1$, we have $0 < \lambda^2 - m^2 < 2m+1$.
Together with $\alpha^2 < \frac{1}{m}$, we obtain
\begin{equation*}
\left|(m-1)\beta^2 + (\beta^*)^2 - m\alpha^2\right|
< \frac{\frac{1}{m}\cdot(2m+1)\cdot(3m+1)}{m^2} <  \frac{7}{m}.
\end{equation*} 
Consequently, we get 
\begin{equation} \label{eq-1-sigma} 
1-\sigma(S) < (m-1)\beta^2 + (\beta^*)^2 +
\frac{5k}{m^2} < m\alpha^2 + 
\frac{10}{m}.
\end{equation} 
Adding (\ref{eq-sigma}) and (\ref{eq-1-sigma}) yields
$1 < 2m\alpha^2 + \frac{12}{m}$, 
which implies 
$  \frac12 - \frac{6}{m} < m\alpha^2 < \sigma (S)$, as needed.  
\end{proof}

\noindent 
{\bf Remark.} 
In Definition \ref{defn-construction}, setting $m = \Theta(n)$ and combining with Proposition \ref{prop-weak}, we obtain $\frac{1}{2} - \sigma(S) < O(1/n)$, which is far smaller than the right-hand side of (\ref{eq-Gre}). Hence, even this weak bound of Proposition \ref{prop-weak} suffices to refute Gregory's bound in  Conjecture \ref{conj-Gre}.

\subsection{Proof of Theorem \ref{thm-main-2}}

In Proposition \ref{prop-weak}, we obtained a weak bound $\frac{1}{2} - \sigma (S) < \frac{6}{m}$. To improve this bound and establish an exponentially small gap, a key ingredient is to utilize the exact identities from Theorems \ref{thm-identity} or \ref{thm:sum-uv}. 
Here, we prefer to use the simple version in Theorem \ref{thm:sum-uv}, which states 
\begin{equation} \label{eq-sec4-2-identity}
\frac12 - \sigma(S) =\frac{1}{\lambda} \sum_{uv\in E(\bar{S}) } x_ux_v .
\end{equation}
We now use the estimates provided by Lemma~\ref{lem-2}.

\begin{claim} \label{cl:shang}
    We have $\sum_{uv\in E(\bar{S})} x_ux_v \le  30k\cdot (m-1)^{-4\ell -2}$.
\end{claim}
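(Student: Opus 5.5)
The plan is to first identify exactly which edges lie inside $G[\bar S]$. Then I bound each edge product using Lemma~\ref{lem-2}(iv), and in the clique I use the exact sum identity from the proof of Lemma~\ref{lem-2}(ii).

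\textbf{Step 1: the edges of $G[\bar S]$.} By (\ref{eq-defn-S}), $\bar S = Y_A\cup\{p_2,p_4,\dots,p_{2\ell}\}\cup C$. I check the edge types one at a time.
\begin{enumerate}[(a)]
\item $Y_A$ is independent in $K_{m,m}$.
\item The only edge leaving $Y_A$ outside $K_{m,m}$ is $y^*p_1$, and $p_1\in S$.
\item Each even path vertex $p_{2i}$ has path neighbours $p_{2i\pm1}$, which lie in $S$. The single exception is $p_{2\ell}$, which is also adjacent to $c_1$.
\item There are no edges between $Y_A$ and $C$, and none between $C$ and any path vertex other than $p_{2\ell}$.
\end{enumerate}
Hence
\[
E(\bar S)=\{p_{2\ell}c_1\}\cup E(K_k),
\qquad
\sum_{uv\in E(\bar S)}x_ux_v = x_{p_{2\ell}}x_{c_1}+\sum_{1\le i<j\le k}x_{c_i}x_{c_j}.
\]

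\textbf{Step 2: bounding the two terms.} For the first term, Lemma~\ref{lem-2}(iv) gives
\[
x_{p_{2\ell}}x_{c_1}<4(m-1)^{-2\ell-1/2}\cdot 8(m-1)^{-2\ell-3/2}=32(m-1)^{-4\ell-2}.
\]
For the clique term, I avoid bounding all $\binom{k}{2}$ products separately, which would lose a factor of $k^2$. Instead I use
\[
\sum_{i<j}x_{c_i}x_{c_j}\le \tfrac12\Bigl(\sum_i x_{c_i}\Bigr)^2 .
\]
The proof of Lemma~\ref{lem-2}(ii) gives the exact identity $\sum_i x_{c_i}=x_{p_{2\ell}}/(\lambda-k+1)$. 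Since $\lambda>m\ge 2k$, we have $\lambda-k+1>m/2+1>(m-1)/2$. Combining this with $x_{p_{2\ell}}<4(m-1)^{-2\ell-1/2}$ gives
\[
\sum_{i<j}x_{c_i}x_{c_j}\le \tfrac12\cdot 16(m-1)^{-4\ell-1}\cdot\frac{4}{(m-1)^2}=32(m-1)^{-4\ell-3},
\]
which is even smaller than the first term.

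\textbf{Step 3: combining.} Adding the two bounds gives at most $64(m-1)^{-4\ell-2}$. This is at most $30k(m-1)^{-4\ell-2}$ because $k\ge 3$.

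If instead one bounds each clique pair crudely by $\binom{k}{2}\cdot 64(m-1)^{-4\ell-3}$, the total is
\[
\Bigl(32+\tfrac{32k(k-1)}{m-1}\Bigr)(m-1)^{-4\ell-2}.
\]
Using $m-1\ge 2k-1$, this is at most $(32+16k)(m-1)^{-4\ell-2}\le 30k(m-1)^{-4\ell-2}$ for $k\ge3$. Either route therefore works.

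The only delicate point is Step 1, namely verifying that no edge of $Y_A$ or of the even path vertices survives in $\bar S$ except $p_{2\ell}c_1$. After that, the estimate is routine bookkeeping with Lemma~\ref{lem-2}.
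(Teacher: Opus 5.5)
Your proposal is correct and follows the paper's argument: you use the same decomposition $E(\bar S)=E(C)\cup\{p_{2\ell}c_1\}$ and the same bridge bound $32(m-1)^{-4\ell-2}$ from Lemma~\ref{lem-2}(iv), and your ``crude'' fallback $\binom{k}{2}\cdot 64(m-1)^{-4\ell-3}\le 16k(m-1)^{-4\ell-2}$ is exactly the paper's clique estimate. Your main route for the clique term is a valid minor refinement that removes the dependence on $k$, giving a total of at most $64(m-1)^{-4\ell-2}$; it uses $\sum_{i<j}x_{c_i}x_{c_j}\le\tfrac12\bigl(\sum_i x_{c_i}\bigr)^2$ together with the exact identity $\sum_i x_{c_i}=x_{p_{2\ell}}/(\lambda-k+1)$ from the proof of Lemma~\ref{lem-2}(ii).
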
 

\begin{proof}[Proof of claim]
Recall that $\bar{S}= Y_A \cup \bar{S}_{\mathrm{path}} \cup C$, where  
$Y_A$ is an independent set of $K_{m,m}$,
and $\bar{S}_{\mathrm{path}} = \{p_2, p_4, \dots, p_{2\ell}\}$
consists of even-indexed vertices,
which are pairwise non-adjacent, and $C$ forms a clique $K_k$. Therefore, 
the edges of $G[\bar{S}]$ are of the following two types: 
\begin{itemize}
\item the $\binom{k}{2}$ edges inside the clique $C$: By Lemma \ref{lem-2} (iv), and $m\ge 2k$, we obtain  
$$ \sum_{uv \in E(C)} x_u x_v \le  \binom{k}{2}\cdot 64(m-1)^{-4\ell-3} < 16 k \cdot (m-1)^{-4\ell -2}.$$
\item the bridge edge $p_{2\ell}c_1$: The contribution of this edge is given as 
$$ x_{p_{2\ell}} x_{c_1} \le 
 4(m-1)^{-2\ell -1/2} \cdot 8(m-1)^{-2\ell -3/2}\le 32 (m-1)^{-4\ell-2}.$$
\end{itemize}
So we obtain 
$\sum_{uv\in E(\bar{S})} x_ux_v
= \sum_{uv\in E(C)} x_u x_v + x_{p_{2\ell}}\,x_{c_1} \le 30k \cdot (m-1)^{-4\ell-2}$, as needed.  
\end{proof}

Applying the identity (\ref{eq-sec4-2-identity}), 
and combining with Claim \ref{cl:shang}, we get 
\begin{equation*}
\frac12 - \sigma(S) \le \frac{30k}{\lambda \,(m-1)^{4\ell +2}} <  
\frac{30k}{(m-1)^{4\ell+3}}, 
\end{equation*}
where the last inequality holds since $\lambda >m$. 
This completes the proof of Theorem \ref{thm-main-2}.

\subsection{Proof of Theorem \ref{thm-main}}

In what follows, we show that Theorem \ref{thm-main} is a direct consequence of Theorem \ref{thm-main-2}.  Indeed, we assume that $k$ is fixed and $n$ is sufficiently large. 
In the case that $n-k$ is even, 
we can choose $m :=\lfloor {(n-k)}/{4}\rfloor$ and $\ell := \lceil {(n-k)}/{4} \rceil$. 
Then $2m + 2\ell + k = n $. So Theorem \ref{thm-main-2} gives 
$$ \frac{1}{2} - \sum_{v\in S} x_v^2 
= O_k(m^{-4\ell -3}) = e^{-\Omega_k(n \log n)}. $$  
To obtain the bound for all sufficiently large $n$, 
we use the same construction of Definition \ref{defn-construction} with a connecting path of order $2\ell + \varepsilon$, where $\varepsilon \in \{0, 1\}$ is chosen so that $n - k - \varepsilon$ is even. Define $S := S_A \cup \{p_i : i \text{ is odd}\}$. The estimates in Lemma \ref{lem-2} and Claim \ref{cl:shang} are unchanged up to absolute constants, with $2\ell$ replaced by $2\ell + \varepsilon$. Taking $m = \lfloor (n - k - \varepsilon)/4 \rfloor$ and $\ell = (n - k - \varepsilon)/2 - m$ gives a graph on exactly $n$ vertices and yields 
$\frac{1}{2} - \sum_{v \in S} x_v^2 \leq e^{-\Omega_k(n \log n)}$. This completes the proof.

\section{Tightness of Theorem \ref{thm-main} and lower bounds}

\label{sec:tight}

In Theorem \ref{thm-identity}, 
we obtained that for any independent set $S$, 
\begin{equation} \label{eq-sec5-identity}
\frac{1}{2} - \sigma (S) =  \frac{q}{4\lambda - 2q},
\end{equation}
where $q = \frac{{\bm{z}^{\mathsf{T}} A_{\bar{S}}\bm{z}}}{\|\bm{z}\|^2}$ is the Rayleigh quotient of the Perron vector on $\bar{S}$. 
Thus, bounding the gap $\frac{1}{2} - \sigma (S)$ from below reduces to bounding quadratic form $q$ from below. As an application, we now prove that the bound in Theorem~\ref{thm-main} {\it cannot} be improved beyond a factor in the exponent.

\begin{thm} \label{thm-lower}
If $G$ is a non-bipartite graph of order $n$,  
 then for any independent set $S$ of $G$, 
\[ \frac{1}{2} - \sum_{v\in S} x_v^2 \ge e^{-O(n\log n)}. \]  
 \end{thm}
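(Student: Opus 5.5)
The plan is to combine the exact identity of Theorem~\ref{thm:sum-uv} with a crude lower bound on the smallest Perron coordinate. As everywhere in the paper, $G$ is taken to be connected so that the Perron vector is positive. Theorem~\ref{thm:sum-uv} gives
\[
\frac12-\sigma(S)=\frac1\lambda\sum_{uv\in E(\bar S)}x_ux_v .
\]

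The first step is to show that the sum on the right is nonempty. If $G[\bar S]$ had no edges, then $S$ and $\bar S$ would both be independent, so $(S,\bar S)$ would be a bipartition of $G$. This contradicts the assumption that $G$ is non-bipartite. Hence there is an edge $uv$ with $u,v\in\bar S$. Since all terms in the sum are positive, we get
\[
\frac12-\sigma(S)\ \ge\ \frac{x_ux_v}{\lambda}\ \ge\ \frac{x_{\min}^2}{n-1},
\]
using $\lambda\le\Delta(G)\le n-1$, where $x_{\min}:=\min_{w}x_w$.

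The remaining step, which is the only real work, is to bound $x_{\min}$ from below by $e^{-O(n\log n)}$. Let $w$ be a vertex with the largest coordinate. Since $\|\bm x\|=1$, we have $x_w\ge n^{-1/2}$. For any edge $ab$, the eigenvalue equation gives $\lambda x_a=\sum_{c\in N(a)}x_c\ge x_b$, that is, $x_a\ge x_b/\lambda$.

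Now take any vertex $t$. By connectivity there is a path $w=a_0,a_1,\dots,a_d=t$ with $d\le n-1$. Applying the edge inequality along this path gives
\[
x_t\ \ge\ \frac{x_w}{\lambda^{d}}\ \ge\ n^{-1/2}(n-1)^{-(n-1)}.
\]

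Substituting into the previous display yields
\[
\frac12-\sigma(S)\ \ge\ \frac{1}{(n-1)\,n\,(n-1)^{2(n-1)}}=e^{-O(n\log n)},
\]
which is the claimed bound. The main potential obstacle is the lower bound on $x_{\min}$, but the path-propagation argument settles it with room to spare. Comparing with Theorem~\ref{thm-main}, whose upper bound is of order $e^{-\Omega_k(n\log n)}$, shows that the exponent $n\log n$ is tight up to constants.
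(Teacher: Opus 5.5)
Your proof is correct and follows the paper's argument: non-bipartiteness forces an edge inside $\bar S$, and the smallest Perron coordinate is bounded below by propagating $x_a\ge x_b/\lambda$ along a path from a vertex with maximal coordinate, which satisfies $x_w\ge n^{-1/2}$. The differences are cosmetic. You plug directly into the edge-sum identity of Theorem~\ref{thm:sum-uv}, whereas the paper goes through the form $\frac{q}{4\lambda-2q}$. You bound the path length by $n-1$ rather than by the diameter. You also state explicitly the connectivity assumption that the paper leaves implicit.
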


 \begin{proof}
Since $G$ is non-bipartite, we see that $\bar{S}$ contains at least one edge $uv \in E(G)$. This gives 
${\bm{z}^{\mathsf{T}} A_{\bar{S}}\bm{z}}=2\sum_{ij\in E(\bar{S})}x_{i} x_{j}\geq 2x_{u}x_{v}$. 
From the eigenvalue equation $A \bm{x} = \lambda \bm{x}$, for the edge $uv$, we have 
$x_v = \frac{1}{\lambda} (x_u+\sum_{w\sim v,w\neq u}x_w)\ge \frac{1}{\lambda}x_u$. It follows that  
$x_v \ge \lambda^{-1} x_u$.

Assume that all coordinates of the Perron vector are sorted in the order $x_{v_1}\ge \cdots \ge x_{v_n}$.  
Let $d(v_1,v_n)$ be the distance between $v_1$ and $v_n$. 
By inducting a shortest path starting from $v_1$ to $v_n$, the above discussion gives   
$x_{v_n} \ge \lambda^{-d(v_1,v_n)} 
x_{v_1}$.  
Invoking the fact $\sum_{i=1}^n x_{v_i}^2=1$, we obtain 
$x_{v_1}\ge {1}/{\sqrt{n}}$. We write $D$ for the diameter of $G$. 
Thus, we conclude that $x_{v_n} \ge \lambda^{-D}/ \sqrt{n} $. Then 
$$
q = \frac{{\bm{z}^{\mathsf{T}} A_{\bar{S}}\bm{z}}}{{\|\bm{z}\|^2}}\geq {\bm{z}^{\mathsf{T}} A_{\bar{S}}\bm{z}} \geq 2x_u x_v \geq \frac{2}{\lambda^{2D}n}.
$$
Since $D<n$ and $\lambda\leq \Delta<n$, we have  $\frac{2}{\lambda^{2D}n}>\frac{2}{n^{2n+1}}$. Combining with (\ref{eq-sec4-2-identity}), 
we obtain 
\[ \frac{1}{2} - \sigma (S) > \frac{q}{4n}\ge \frac{1}{2n^{2n+2}} = e^{-O(n\log n)}. \qedhere \] 
\end{proof}

It is worth noting that even in the non-bipartite case, the above lower bound can be quite weak. 
As an illustration, we consider the complete $k$-partite graph $K_{n/k,\ldots,n/k}$, where $k$ divides $n$. Then $D=2$ and $\lambda = \frac{k-1}{k}n$. By symmetry, we obtain $x_v=1/\sqrt{n}$. 
So a maximum independent set gives $\sigma(S)=\frac{n}{k}\cdot \frac{1}{n}=\frac{1}{k}$ and 
$\frac{1}{2} - \sigma (S)=\frac{1}{2}-\frac{1}{k}$. 
However, in the proof of Theorem \ref{thm-lower}, 
we see that $q\ge {2}{\lambda^{-2D}/n}$. Then (\ref{eq-sec4-2-identity}) gives $\frac{1}{2} - \sigma (S)= \Omega(1/n^{6})$, which is a polynomial bound.

\smallskip 
The construction in Theorem \ref{thm-main} exhibits exponential decay due to its diameter being $\Theta(n)$. For graphs with bounded diameter, such as complete $k$-partite graphs, the gap stays polynomial in terms of $n$. 
This difference suggests that the size of the gap may be determined by the diameter, rather than the chromatic number. Inspired by this observation, a more appropriate framework is possibly to fix the diameter $D$, and ask how the gap $\frac{1}{2}- \sigma (S)$ depends on $D$ and $n$ jointly. 

\smallskip 
In the sequel, we provide some lower bounds for $\frac{1}{2} - \sigma (S)$, where $S$ is an arbitrary independent set of $G$.  
Applying Theorem \ref{thm:sum-uv}, we obtain 
a local weighted lower bound.

\begin{thm} \label{thm:local}
Let $G$ be a connected graph of order $n$, let $\bm{x}$ be its unit Perron vector, let $S$ be an independent set. Let $\lambda=\lambda(G)$, choose a vertex $v^\ast\in V(G)$ such that
$x_{v^\ast}:=\max\{x_v: v\in V(G)\}$,
and define
$\rho (u):=\operatorname{dist}_G(u,v^*)$ for each $u\in V(G)$. Then
\[
\frac{1}{2} - \sum_{v\in S}x_v^2
\ge\frac{1}{n}
\sum_{uv\in E(\bar{S})}\lambda^{-\rho (u)- \rho (v) -1}.
\]
\end{thm}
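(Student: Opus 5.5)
The plan is to start from the exact identity of Theorem \ref{thm:sum-uv},
\[
\frac12-\sum_{v\in S}x_v^2=\frac1\lambda\sum_{uv\in E(\bar S)}x_ux_v ,
\]
and bound each product $x_ux_v$ from below by a quantity depending only on the distances $\rho(u),\rho(v)$ to $v^*$. Since every edge of $G[\bar S]$ contributes a nonnegative term, it suffices to show
\[
x_u x_v\ge \frac1n\,\lambda^{-\rho(u)-\rho(v)}
\]
for every edge $uv\in E(\bar S)$. After multiplying by $1/\lambda$, this produces exactly the exponent $-\rho(u)-\rho(v)-1$ in the statement.

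The key step is a propagation inequality along shortest paths, as in the proof of Theorem \ref{thm-lower}. If $a\sim b$, the eigenvalue equation $\lambda x_b=\sum_{w\sim b}x_w$ and positivity of $\bm x$ give $x_b\ge \lambda^{-1}x_a$. Take a shortest path $v^*=w_0,w_1,\dots,w_{\rho(u)}=u$ and iterate this inequality; this yields $x_u\ge\lambda^{-\rho(u)}x_{v^*}$, and similarly $x_v\ge \lambda^{-\rho(v)}x_{v^*}$. Connectedness of $G$ guarantees that $\rho$ is finite everywhere.

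Next, since $x_{v^*}$ is the maximum coordinate and $\sum_w x_w^2=1$, we have $n x_{v^*}^2\ge 1$, i.e.\ $x_{v^*}^2\ge 1/n$. Multiplying the two propagation bounds gives
\[
x_ux_v\ge \lambda^{-\rho(u)-\rho(v)}x_{v^*}^2\ge \frac1n\lambda^{-\rho(u)-\rho(v)} .
\]
Summing over $E(\bar S)$ and dividing by $\lambda$ completes the argument.

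There is no real obstacle here; the argument is a localized version of the proof of Theorem \ref{thm-lower}. The only points needing care are bookkeeping. The factor $x_{v^*}^2$ appears once for the product, not once per endpoint, so it is bounded a single time by $1/n$. The extra factor $\lambda^{-1}$ comes from the identity rather than from the paths. If $u$ or $v$ equals $v^*$, the corresponding distance is $0$ and the bound still holds trivially.
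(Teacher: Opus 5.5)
Your proposal is correct and follows essentially the same route as the paper's proof: it combines the identity of Theorem~\ref{thm:sum-uv} with the propagation bound $x_u\ge \lambda^{-\rho(u)}x_{v^*}$ along shortest paths from $v^*$, and with $x_{v^*}^2\ge 1/n$, bounding each $x_ux_v$ termwise. Your bookkeeping also matches the paper's: the factor $x_{v^*}^2$ is bounded once per edge, and the extra $\lambda^{-1}$ comes from the identity.
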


\begin{proof}
For a vertex $u\in V(G)$, let $v^\ast := w_0 \to w_1 \to \cdots \to w_\ell := u$ (where $\ell =\rho (u)$) 
be a shortest path starting from $v_\ast$ to $u$. For each $1\le i\le \ell$, the eigenvalue equation at $w_i$ gives $\lambda x_{w_i}=\sum_{z\sim w_i}x_z\ge x_{w_{i-1}}$, 
hence $x_{w_i}\geq x_{w_{i-1}} \lambda^{-1}$.
Iterating along the path yields $x_u\geq x_{v^*}\,\lambda^{- \rho (u)}$. 
Note that $x_{v^*}^2 \ge 1/n$ since $\bm{x}$ is unit. 
Therefore, for every edge $uv$ in $G[\bar{S}]$, 
\[
x_u x_v\geq x_{v^*}^2\,\lambda^{- \rho(u)-\rho (v)}
\geq \frac{1}{n} \lambda^{- \rho(u)- \rho(v)}.
\]
Combining with Theorem~\ref{thm:sum-uv}, we obtain
\[
\frac{1}{2} - \sum_{v\in S}x_v^2=\frac1{\lambda}\sum_{uv\in E(\bar{S})}x_u x_v
\geq \frac{1}{n}\sum_{uv\in E(\bar{S})}\lambda^{-\rho(u)-\rho(v) -1},
\]
which gives the desired inequality.
\end{proof}

Next, we give an corollary of Theorem \ref{thm:local}. We present a lower bound on $\frac{1}{2} - \sum_{v\in S} x_v^2$ in terms of the diameter and the chromatic number of $G$. 

\begin{cor} \label{cor:radius}
Under the hypotheses of Theorem~\ref{thm:local}, 
let  $D$ be the diameter and $\chi(G)=k$ be the chromatic number of $G$, where $k\ge 3$. Then
\[
\frac{1}{2} - \sum_{v\in S}x_v^2 
\ge \frac{\binom{k-1}{2}}{n\,\lambda^{2 D+1}}.
\]
\end{cor}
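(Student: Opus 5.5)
The plan is to apply Theorem~\ref{thm:local} directly. Two facts are needed: a lower bound of $\binom{k-1}{2}$ on the number of edges of $G[\bar{S}]$, and an upper bound of $2D$ on each exponent $\rho(u)+\rho(v)$.

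\textbf{Distance bound.} For any vertex $u$ we have $\rho(u)=\operatorname{dist}_G(u,v^*)\le D$. Hence for every edge $uv$ of $G[\bar S]$,
\[
\rho(u)+\rho(v)+1\le 2D+1.
\]
Since $\lambda\ge 1$ (indeed $\lambda\ge 2$ for a connected non-bipartite graph), this gives $\lambda^{-\rho(u)-\rho(v)-1}\ge \lambda^{-2D+ -1}$ with exponent $-(2D+1)$. Theorem~\ref{thm:local} then yields
\[
\frac12-\sum_{v\in S}x_v^2\ \ge\ \frac{|E(\bar S)|}{n\lambda^{2D+1}}.
\]

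\textbf{Edge count.} Set $H=G[\bar S]$. Since $S$ is independent, any proper coloring of $H$ extends to a proper coloring of $G$ by giving $S$ one new color. Therefore
\[
\chi(H)\ge \chi(G)-1=k-1.
\]
Any graph with chromatic number $r$ has at least $\binom{r}{2}$ edges. The standard reason: in an optimal proper coloring, every pair of color classes must be joined by at least one edge, since otherwise the two classes could be merged into one, contradicting optimality. Applying this with $r=k-1$ gives $|E(H)|\ge\binom{k-1}{2}$. Because $k\ge3$, this count is positive.

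\textbf{Conclusion.} Substituting $|E(\bar S)|\ge\binom{k-1}{2}$ into the displayed inequality gives the stated bound. No step is a real obstacle. The only point needing care is justifying $\binom{r}{2}$ edges via the color-class merging argument, and that is routine.
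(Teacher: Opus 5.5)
Your proof is correct and has the same skeleton as the paper's: apply Theorem~\ref{thm:local}, bound each exponent $\rho(u)+\rho(v)+1$ by $2D+1$, and show $e(G[\bar S])\ge\binom{k-1}{2}$ from $\chi(G[\bar S])\ge k-1$.

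The one genuine difference is how you count edges.
\begin{itemize}
\item You use the colour-class merging argument: in an optimal colouring of a graph with chromatic number $r$, every pair of classes is joined by an edge, giving at least $\binom{r}{2}$ edges.
\item The paper instead passes to a colour-critical subgraph $H$ of $G[\bar S]$. It proves $\delta(H)\ge\chi(H)-1\ge k-2$ and then uses $2e(H)\ge (k-2)|V(H)|\ge (k-2)(k-1)$.
\end{itemize}
Your route is shorter and self-contained for this corollary. The paper's route costs a little more, but it yields the minimum-degree property of $H$. That property is exactly what Corollary~\ref{cor:coarse} reuses, to find a vertex $u\in V(H)$ close to $v^*$ with $d_H(u)\ge k-2$.

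Your remark that $\lambda\ge 2$ is useful. This is what justifies replacing $\lambda^{-\rho(u)-\rho(v)-1}$ by $\lambda^{-2D-1}$, and the paper leaves it implicit.

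Two cosmetic fixes:
\begin{itemize}
\item The exponent typo ``$-2D+ -1$'' should read $-(2D+1)$.
\item Apply the $\binom{r}{2}$ bound with $r=\chi(G[\bar S])\ge k-1$ and then use monotonicity of $\binom{r}{2}$, rather than literally setting $r=k-1$. Note that $\chi(G[\bar S])$ may equal $k$.
\end{itemize}
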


\begin{proof}
For every edge $uv\in E(\bar{S})$, we have  $\rho (u), \rho(v)\le D$. 
Now assume $\chi(G)=k\ge 3$. Since $S$ is independent, we have $\chi(G[\bar{S}])\ge k-1$. 
A graph $H$ is {\it \it color-critical} 
if $\chi (H')<\chi (H)$ for every proper subgraph $H'$ of $H$. By taking a color-critical subgraph of $G[\bar{S}]$, 
we may assume $\delta (H)\ge \chi (H)-1$. Otherwise, if $\delta (H)\le \chi (H) -2$, then 
let $v\in V(H)$ be a vertex with minimum degree, since $H\setminus \{v\}$ can be colored by at most $\chi (H)-1$ colors, and the neighbors of $v$ use at most $\chi (H)-2$ colors, so $H$ can be colored by $\chi (H)-1$ colors, a contradiction.  

From the above discussion, we have $\delta (H)\ge \chi (H) -1 \ge k-2$, so
\[
2e(H)\ge (k-2)|V(H)|\ge (k-2)(k-1).
\]
Hence
\[
e(G[\bar{S}])\ge e(H)\ge \binom{k-1}{2}.
\]
Applying Theorem \ref{thm:local} gives
$\frac{1}{2} - \sum_{v\in S}x_v^2\ge 
{\binom{k-1}{2}}/ {\big(n\,\lambda^{2 D+1} \big)}$, as needed. 
\end{proof}

\begin{cor} \label{cor:coarse}
Let $G$ be a connected graph of order $n$ with chromatic number $k\ge 3$, let $\bm{x}$ be its unit Perron vector, and let $S$ be an independent set. Then
\[
\frac{1}{2} - \sum_{v\in S}x_v^2 
> \frac{k-2}{n^{2n-2k+5}}.
\]
\end{cor}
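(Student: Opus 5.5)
The plan is to derive Corollary~\ref{cor:coarse} directly from Corollary~\ref{cor:radius}:
\[
\frac12-\sum_{v\in S}x_v^2\ \ge\ \frac{\binom{k-1}{2}}{n\,\lambda^{2D+1}}.
\]
It then suffices to bound three quantities: the numerator from below, $\lambda$ from above, and the diameter $D$ from above in terms of $n$ and $k$.

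For the numerator, since $k\ge 3$ we have $\binom{k-1}{2}=\frac{(k-1)(k-2)}{2}\ge k-2$. For the spectral radius, $\lambda\le\Delta(G)\le n-1<n$. The only real content is the diameter.

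I would show that every connected graph satisfies $\chi(G)\le n-D+1$, hence $D\le n-k+1$. Take a shortest path $v_0v_1\cdots v_D$ realizing the diameter. Because it is a shortest path, $v_i$ and $v_j$ are non-adjacent whenever $|i-j|\ge 2$. So the even-indexed path vertices form an independent set, and so do the odd-indexed ones; these two classes receive two colors. Each of the remaining $n-D-1$ vertices gets its own new color. This is a proper coloring with $n-D+1$ colors, so $k\le n-D+1$, i.e.\ $D\le n-k+1$.

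Combining, $2D+1\le 2n-2k+3$, and since $1<\lambda<n$ we get
\[
n\lambda^{2D+1}< n^{2n-2k+4}\le n^{2n-2k+5}.
\]
Therefore
\[
\frac12-\sum_{v\in S}x_v^2\ \ge\ \frac{k-2}{n\lambda^{2D+1}}\ >\ \frac{k-2}{n^{2n-2k+5}}.
\]
This has one factor of $n$ to spare, so there is slack for edge cases such as needing $\lambda>1$ to use monotonicity of powers; $\lambda>1$ holds since $G$ is connected, non-bipartite and has $n\ge 3$.

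The main obstacle is the diameter bound $D\le n-k+1$; the rest is routine substitution. The shortest-path two-coloring argument above is the approach I would use for it.
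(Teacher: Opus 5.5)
Your proof is correct, but it takes a different route from the paper.

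\textbf{The paper's route.} The paper does not pass through Corollary~\ref{cor:radius}; it applies Theorem~\ref{thm:local} locally. It takes a color-critical subgraph $H$ of $G[\bar S]$ and picks $u\in V(H)$ nearest to $v^*$, with $d=\operatorname{dist}(v^*,u)$. It then uses only the at least $k-2$ edges of $H$ at $u$, whose endpoints lie within distance $d+1$ of $v^*$, to get $\frac{k-2}{n\lambda^{2d+2}}$. Finally it bounds $d\le n-k+1$: a shortest $v^*$--$u$ path has $d$ vertices outside $H$, while $|V(H)|\ge k-1$.

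\textbf{Your route.} You use all $\binom{k-1}{2}$ edges of $G[\bar S]$ from Corollary~\ref{cor:radius}, with the cruder distance bound $D$ on each endpoint. You then control $D$ globally through $\chi(G)\le n-D+1$: a diametral shortest path is induced, so it can be $2$-colored, and every other vertex gets its own color. This step is valid, and so are the remaining estimates $\binom{k-1}{2}\ge k-2$ and $1<\lambda<n$.

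\textbf{What each route buys.} Given Corollary~\ref{cor:radius}, your argument is shorter. It also yields the slightly stronger bound $\binom{k-1}{2}/n^{2n-2k+4}$, so the stated exponent $2n-2k+5$ has a factor of $n$ to spare. The paper's route instead gives the intermediate estimate $\frac{k-2}{n\lambda^{2d+2}}$ in terms of the local quantity $d=\operatorname{dist}(v^*,H)$. That estimate is sharper than any diameter-based one when $H$ lies much closer to $v^*$ than the diameter of $G$. This advantage disappears once $d$ is replaced by its worst case $n-k+1$, which is the same numerical bound your diameter argument gives for $D$.
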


\begin{proof}
Let $\bar{S}:=V(G)\setminus S $ and  $v^\ast\in V(G)$ be a vertex defined as in Theorem~\ref{thm:local}. Let $H$ be a color-minimal subgraph of $G[\bar{S}]$ with $\chi(H)\ge k-1$. Such a subgraph exists because $\chi(G[\bar{S}])\ge k-1$. 

Choose a vertex $u\in V(H)$ such that  the distance $d( v^*,u)$ is minimum. Denote $d:=d(v^*, u)$. From the proof of Corollary \ref{cor:radius}, we know that $d_H(u)\ge k-2$. 
For each $w\in N_H(u)$, we get  $d(v^*,w)\le d(v^*,u) +d(u,w) \le d+1$. Applying Theorem~\ref{thm:local} yields
\[
\frac{1}{2} - \sum_{v\in S}x_v^2\ge 
\frac{1}{n} \sum_{w\in N_H(u)} \frac{1}{\lambda^{\rho(u) + \rho (w) +1}} 
\ge \frac{k-2}{n\,\lambda^{2d+2}}.
\]
It remains to bound $d$. By the minimality of $d(v^*,u)$, a shortest path from $v^*$ to $u$ meets $H$ only at its last vertex $u$. Thus, this path contributes $d$ vertices outside $H$, so
$n\ge d+|V(H)|$. 
Since $\chi(H)\ge k-1$, we have $|V(H)|\ge k-1$, and therefore $d\le n-k+1$. 
Consequently, we get 
\[
\frac{1}{2} - \sum_{v\in S}x_v^2\ge \frac{k-2}{n\,\lambda^{2n-2k+4}}.
\]
Finally, invoking the fact $\lambda(G)\le \Delta(G) < n$, we obtain the desired inequality.
\end{proof}

\section{Concluding remarks}

\label{sec:concluding}

In this paper, 
we first showed that cycles $C_n$ (with odd integers $n\ge 7$) serve as  counterexamples to Gregory's conjecture; see Example \ref{example-1}.  This is much simpler than the construction recently provided by Liu and Ning \cite{LN2026}. 
Second, we established two identities for  
$\frac{1}{2} - \sum_{v\in S}x_v^2$; see Theorems \ref{thm-identity} and \ref{thm:sum-uv}.  
These identities capture the intrinsic relation between the gap $\frac{1}{2} - \sum_{v\in S}x_v^2$
 and the graph structure of $G[\bar{S}]$. 
Third, we constructed a graph $G$ with chromatic number $k$ and found an independent set $S$ such that $\sum_{v\in S}x_v^2$ can be arbitrarily close to $1/2$, with a small gap decaying as $\exp(-\Omega(n\log n))$; see Theorems \ref{thm-main} and \ref{thm-main-2}. 
Finally, we illustrated that the bound in Theorem \ref{thm-main} is tight up to a constant factor in the exponent; see Theorem \ref{thm-lower}. 
Moreover, we provide some local weighted lower bounds involving the distances of vertices; see Theorem \ref{thm:local}.  

\smallskip 
Our construction essentially consists of two parts. One part is a large complete bipartite graph, which is designed to make $\sum_{v\in S}x_v^2$ close to $1/2$. The other part is a clique, which is used to satisfy the chromatic number constraint; see Definition \ref{defn-construction}. 
As for the key techniques, we employ a long path to attenuate the contributions of the vertices outside the bipartite graph, and then apply the above identity to bound the gap 
$\frac{1}{2} - \sum_{v\in S}x_v^2$ accurately. 
Our construction not only completely disproves the polynomial lower bound conjectured by Liu and Ning, but also demonstrates that $\sum_{v\in S}x_v^2$ cannot be separated from $1/2$ by any function of polynomial order in $n$ and $k$.

\smallskip
Let $C_{n,k}$ be the minimum value of the gap  $\frac{1}{2} - \sum_{v\in S}x_v^2$, where $G$ is an $n$-vertex connected graph with chromatic number $k\ge 3$ and unit Perron vector $\bm{x}$, and $S$ is an independent set of $G$. 
Combining Theorems \ref{thm-main} and \ref{thm-lower}, we see that $\log C_{n,k} = - \Theta_k (n \log n)$. Therefore, it would be interesting to determine an exact constant $\delta_k$ such that $\log C_{n,k}= - \big(\delta_k +o(1) \big)\, n\log n$.

\begin{thebibliography}{99}

\bibitem{BFP2008}
A. Bhattacharya, S. Friedland, U.N. Peled, 
On the first eigenvalue of bipartite graphs, 
Electron. J. Combin. 15 (2008), Paper No. R144. 

\bibitem{Bil2006}
Y. Bilu, 
Tales of Hoffman: Three extensions of Hoffman's bound on the graph chromatic number, 
J. Combin. Theory Ser. B 96 (2006) 608--613. 

\bibitem{Cio2010} 
S.M. Cioab\u{a}, A necessary and sufficient eigenvector condition for a connected graph to be bipartite,
Electron. J. Linear Algebra 20 (2010) 351--353.

\bibitem{Cio2021} 
S.M. Cioab\u{a}, The principal eigenvector of a connected graph, in: Spectral Graph Theory Online
Conference, (2021), \url{http://spectralgraphtheory.org/sgt-online-program}.

\bibitem{CDT2024}
S.M. Cioabă, D.N. Desai, M. Tait, 
The spectral even cycle problem, 
Combin. Theory 4 (1) (2024), Paper No. 10. 

\bibitem{CH2026}
P. Csikvári, V. Harangi, 
The least balanced graphs and trees, 
J. Combin. Theory Ser. B 179 (2026) 147--204.

\bibitem{GN2008}
C.D. Godsil and M.W. Newman, Eigenvalue bounds for independent sets, J. Combin. Theory Ser. B 98 (2008) 721--734. 

\bibitem{Hae2021}
W.H. Haemers, Hoffman's ratio bound, Linear Algebra Appl. 617 (2021) 215--219.

\bibitem{Hua2019}
H. Huang, Induced subgraphs of hypercubes and a proof of the sensitivity conjecture, 
Ann. of Math. (2) 190 (3) (2019) 949–955.

\bibitem{JTYZZ2021}
Z. Jiang, J. Tidor, Y. Yao, S. Zhang, Y. Zhao, Equiangular lines with a fixed angle, 
Ann. of Math. (2) 194 (3) (2021) 729–743.

\bibitem{KW2024}
M. Kwan, Y. Wigderson, 
The inertia bound is far from tight, 
Bull. Lond. Math. Soc. 56 (10) (2024) 3196--3208.


\bibitem{LZZ2025}
S. Li, S. Zhao, L. Zou, Spectral extrema of graphs with fixed size: forbidden a fan graph, a friendship graph, or a theta graph, J. Graph Theory 110 (4) (2025) 483--495.

\bibitem{LZS2024}
X. Li, M. Zhai, J. Shu, 
A Brualdi--Hoffman--Turán problem on cycles,
European J. Combin. 120 (2024), Paper No. 103966.

\bibitem{LLZ-spectral-ESS}
Y. Li, H. Liu, S. Zhang, 
An edge-spectral Erd\H{o}s--Stone--Simonovits theorem and its stability, (2025), arXiv:2508.15271.


\bibitem{LLZ-color-critical}
Y. Li, H. Liu, S. Zhang, 
Edge-spectral Tur\'{a}n theorems for color-critical graphs with applications, (2025), arXiv:2511.15431. 

\bibitem{LN2023}
L. Liu, B. Ning, Unsolved problems in spectral
graph theory, Oper. Res. Trans., 27 (2023) 33--60.

\bibitem{LN2026} 
L. Liu, B. Ning, Local properties of the spectral radius and Perron
vector in graphs, 
J. Combin. Theory Ser. B 176 (2026) 241--253.


\bibitem{Tait2019}
M. Tait, 
The Colin de Verdi\'{e}re parameter, excluded minors, and the spectral radius, 
J. Combin. Theory Ser. A 166 (2019) 42--58.

\bibitem{TZE2025}
Q. Tang, S. Zhang, C. Elphick, 
Inertia, independence and expanders, 
Bull. Lond. Math. Soc. 57 (12) (2025) 4076--4095. 

\bibitem{Wil1967} 
H.S. Wilf, 
The eigenvalues of a graph and its chromatic number, 
J. London Math. Soc. 42 (1967) 330--332. 

\bibitem{WE2013} 
P. Wocjan, C. Elphick, 
New spectral bounds on the chromatic number encompassing all eigenvalues 
of the adjacency matrix, 
Electronic J. Combin. 20(3) (2013), \# P39. 


\end{thebibliography}
\end{document}